\newcommand{\Z}{\mathbb{Z}}
\newcommand{\N}{\mathbb{N}}
\newcommand{\C}{\mathbb{C}}
\newcommand{\spin}{\ifmmode{\rm Spin}\else{${\rm spin}$\ }\fi}
\newcommand{\spinc}{\ifmmode{{\rm Spin}^c}\else{${\rm spin}^c$}\fi}
\DeclareMathOperator{\lk}{lk}
\newcommand{\Vast}{\bBigg@{2.5}} % or 4.3?   To make large symbol
\newtheoremstyle{thm}{}{}{\itshape}{}{\bfseries}{}{ }{} %Thereom style
\newtheoremstyle{definition}{}{}{}{}{\bfseries}{}{ }{} %Definition style
\theoremstyle{thm}
\newtheorem{Theorem}{Theorem}[section]
\newtheorem{theorem}[Theorem]{Theorem}
\newtheorem{lemma}[Theorem]{Lemma}
\newtheorem{proposition}[Theorem]{Proposition}
\newtheorem*{Theorem-ohne}{Theorem}
\newtheorem{question}[Theorem]{Question}
\theoremstyle{definition}
\newtheorem{rem}[Theorem]{Remark}
\definecolor{amaranth}{rgb}{0.9, 0.17, 0.31} %dark red
\definecolor{carrotorange}{rgb}{0.93, 0.57, 0.13} %orange
\definecolor{citrine}{rgb}{0.89, 0.82, 0.04} %dark yellow
\definecolor{dartmouthgreen}{rgb}{0.05, 0.5, 0.06} %green
\definecolor{ballblue}{rgb}{0.13, 0.67, 0.8} %blue
\definecolor{ceruleanblue}{rgb}{0.16, 0.32, 0.75} %deeper blue
\definecolor{amethyst}{rgb}{0.6, 0.4, 0.8} %purple
\definecolor{amber}{rgb}{1.0, 0.75, 0.0} %amber
\definecolor{burlywood}{rgb}{0.87, 0.72, 0.53} %beigebrown
\numberwithin{equation}{section}
\begin{document}

%%%%%%%%%%%%%%%%%%%%%%%%%%%%% Title and authors %%%%%%%%%%%%%%%%%%%%%%%%%%%%%%%%%%%%
\title{Census L-space knots are braid positive, except for one that is not.}
%\title{Braid positive hyperbolic L-space knots and not} 

\author{Kenneth L. Baker}
\address{Department of Mathematics, University of Miami, Coral Gables, FL 33146, USA}
\email{k.baker@math.miami.edu}

\author{Marc Kegel}
\address{Humboldt-Universit\"at zu Berlin, Rudower Chaussee 25, 12489 Berlin, Germany.}
\email{kegemarc@math.hu-berlin.de, kegelmarc87@gmail.com}

%%%%%%%%%%%%%%%%%%%%%%%%%%%%% Abstract %%%%%%%%%%%%%%%%%%%%%%%%%%%%%%%%%%%%

\date{\today} % date on first page

\begin{abstract}
We exhibit braid positive presentations for all L-space knots in the SnapPy census except one, which is not braid positive.   The normalized HOMFLY polynomial of $o9\_30634$, when suitably normalized is not positive, failing a condition of Ito for braid positive knots.

We generalize this knot to a 1-parameter family of hyperbolic L-space knots that might not be braid positive.  Nevertheless, as pointed out by Teragaito, this family yields the first examples of hyperbolic L-space knots whose formal semigroups are actual semigroups, answering a question of Wang. 
Furthermore, the roots of the Alexander polynomials of these knots are all roots of unity, disproving a conjecture of Li-Ni.
\end{abstract}

\keywords{L-space knots, braid positivity, SnapPy census knots, formal semi groups of L-space knots}

\makeatletter
\@namedef{subjclassname@2020}{%
  \textup{2020} Mathematics Subject Classification}
\makeatother%For 2020

\subjclass[2020]{57M25; 57R65, 57M12} % Mathematical subject classification

\maketitle

%\tableofcontents
%%%%%%%%%%%%%%%%%%%%%%%%%%%%%%%%%%%%%%%%%% INTRODUCTION %%%%%%%%%%%%%%%%%%%%%%%%%%%%%

\section{Introduction}
Based on observation, most L-space knots are braid positive.  Here, {\em L-space knots} are knots in $S^3$ with a {\em positive} Dehn surgery to an L-space~\cite{Ozsvath2005}, and a knot that is the closure of a positive braid is  {\em braid positive}. The L-space torus knots are the positive torus knots, and hence they are braid positive.  Notably however, the $(2,3)$--cable of the $(2,3)$--torus knot is an L-space knot~\cite{Hedden2005} that is not braid positive, see e.g.~\cite[Table 8]{Dunfield2019} and~\cite[Example 1]{anderson2021lspace}.  It stands to reason that there probably are other cable L-space knots which are not braid positive.  Nevertheless, it was questioned if every hyperbolic L-space knot is braid positive, e.g.~\cite[Problem 31.2]{Hom2017}.

Dunfield shows that there are exactly 1,267 complements of knots in $S^3$ in the SnapPy census of $1$--cusped hyperbolic manifolds that can be triangulated with at most $9$ ideal tetrahedra~\cite{Dunfield2018}.   He further determines that (up to mirroring) 635 are not L-space knots, 630 are L-space knots, and left 2 as undetermined~\cite{Dunfield2019}.  These last 2 have been shown to have quasi-alternating surgeries~\cite{BKM} and hence they are L-space knots as well.  Thus there are exactly 632 L-space knots in the SnapPy census. 

\begin{theorem}\label{thm:braidpositivecensus}
Every L-space knot in the SnapPy census of up to $9$ tetrahedra is braid positive except for $o9\_30634$ which is not.   
\end{theorem}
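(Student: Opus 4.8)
The plan is to prove the statement in two logically independent parts: first to exhibit an explicit positive braid whose closure is each of the census L-space knots other than $o9\_30634$, and then separately to prove, via a polynomial obstruction, that $o9\_30634$ admits no such presentation. The asymmetry is essential. Producing a single positive braid whose closure is a given knot is already a complete proof of braid positivity for that knot, so the positive direction reduces to a large but finite search; the negative direction, by contrast, cannot be settled by any search and requires a genuine certificate of non-existence.

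For the positive direction, I would exploit that L-space knots are fibered, so the Seifert genus $g$ of each census knot is read directly off the degree of its Alexander polynomial (which SnapPy computes). If a knot is the closure of a positive braid $\beta$ on $n$ strands with $c$ crossings, then by Stallings its fiber is the Bennequin surface, of Euler characteristic $n-c$; since the closure is a knot, $n-c = 1-2g$, which forces $c = 2g+n-1$. Thus, once the strand number $n$ is fixed, the crossing number is pinned down, and one enumerates only the positive braid words of that exact length (modulo conjugacy and the braid relations). For each candidate I would form the closure and test it against the target census knot---cheaply by comparing hyperbolic volume and other SnapPy invariants, and then decisively by a rigorous isometry check of the complements. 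Starting $n$ at the braid index (bounded below by the Morton--Franks--Williams inequality applied to the HOMFLY polynomial) and increasing it until a match is found keeps the enumeration tractable, and in practice a positive braid is located for every census L-space knot except $o9\_30634$.

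For the negative direction, I would invoke Ito's necessary condition for braid positivity stated in terms of the HOMFLY polynomial: after the appropriate normalization, the relevant coefficients of the HOMFLY polynomial of any positive braid closure must be nonnegative. I would compute the HOMFLY polynomial of $o9\_30634$, normalize it as in Ito's criterion, and exhibit a coefficient of the wrong sign. This shows directly that no positive braid has $o9\_30634$ as its closure, upgrading the failure of the search from ``not found'' to ``does not exist.''

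The main obstacle is exactly this negative direction: the search alone produces only positive evidence, so excluding braid positivity for $o9\_30634$ rests entirely on having an obstruction sharp enough to detect it, and Ito's HOMFLY condition is precisely such a tool. The crux is to verify carefully that the normalized HOMFLY polynomial genuinely violates the required positivity---and to confirm that both the polynomial computation and the identification of $o9\_30634$ within the census are correct. A secondary, purely computational hurdle is keeping the positive-braid enumeration feasible as the strand number grows, which the fiberedness genus relation $c = 2g+n-1$ and the Morton--Franks--Williams lower bound on the braid index together control.
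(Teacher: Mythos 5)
Your negative direction is exactly the paper's argument: compute the HOMFLY polynomial of $o9\_30634$ from a braid word, apply Ito's normalization, and exhibit coefficients of the wrong sign (the paper finds $h_{3\,0}=h_{3\,1}=-1$), which certifies that no positive braid has this knot as its closure. Your positive direction, however, is a genuinely different route. The paper never searches the space of positive braids: it starts from braid words for each census knot already in hand (obtained in the authors' earlier work \cite{BKM} by automating the method of \cite{anderson2021lspace}) and then massages those specific words, via the braid and simplification machinery in SnapPy/Sage, into manifestly positive ones, recorded in the appendix and verified at \cite{BK}. You instead propose enumerating abstract positive braid words of the forced length $c=2g+n-1$ (correctly derived from Stallings' fiberedness of positive braid closures together with fiberedness of L-space knots and $g=\deg\Delta$) and matching closures to census knots by rigorous isometry checks. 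This is logically sound, and you could even make the search provably finite without waiting for a hit: in a reduced positive braid every generator occurs at least twice, so $c\geq 2(n-1)$ and hence $n\leq 2g+1$.

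The genuine problem is feasibility, not logic. Many census L-space knots have large genus; their positive representatives in the paper's table run to well over a hundred crossings on ten or more strands (e.g.\ the torus-knot-like entries $t01598$, $o9\_31165$, or $o9\_03118$). Enumerating positive words of length $c\approx 2g+n-1$ in that regime, even modulo conjugacy and the braid relations, is astronomically beyond reach, so the asserted step ``in practice a positive braid is located'' would simply not happen for a substantial fraction of the 631 knots, and your positive direction would remain unproved for them. What makes the paper's computation tractable is precisely that it transforms a known braid presentation of the given knot rather than searching blindly; to salvage your framework you would need to replace enumeration by such targeted simplification for the high-genus knots, reserving the brute-force search (and your genus-based length bound) for the small ones.
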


\begin{figure}
    \centering
    \includegraphics[width=\textwidth]{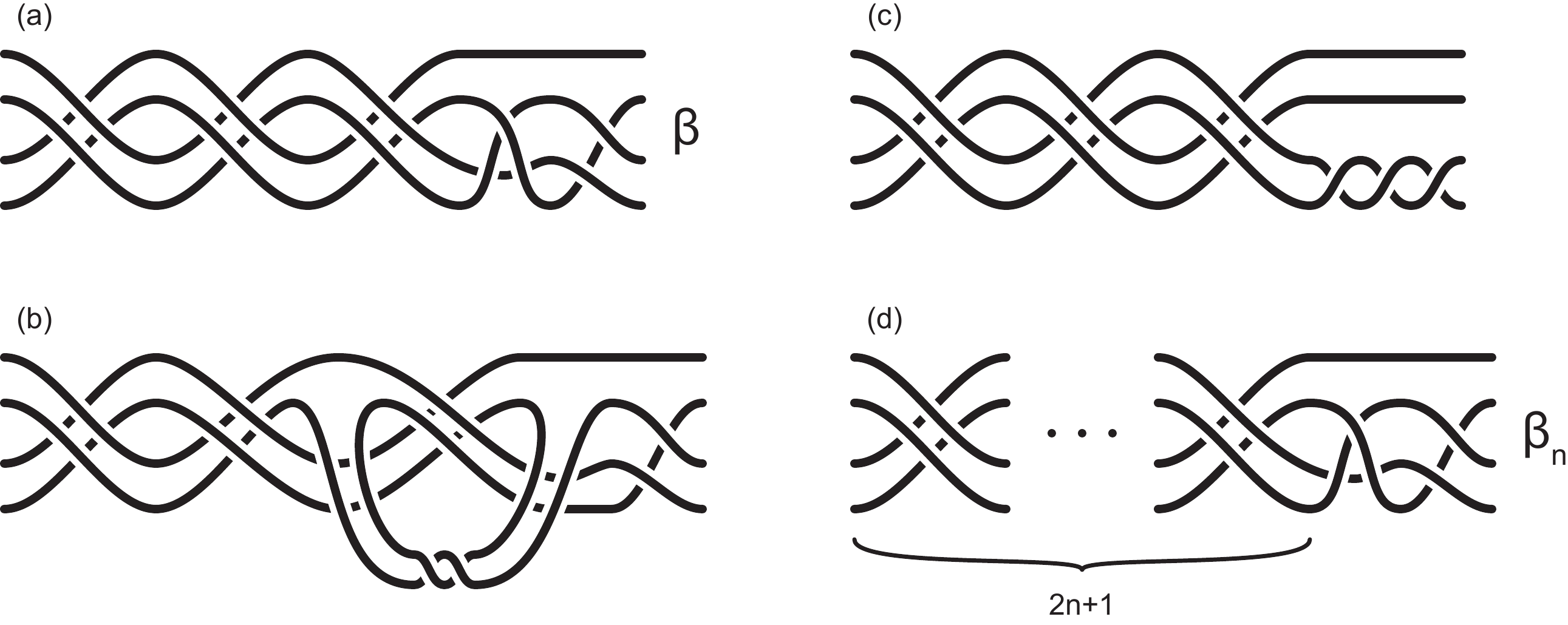}
    \caption{(a) The braid $\beta$ is positive except for one strongly quasipositive crossing.  Its closure $\widehat{\beta}$ is the hyperbolic L-space knot $o9\_30634$ which we show is not braid positive.  (b) Dragging the base of the strongly quasipositive band of $\beta$ into the position shown exhibits  $\widehat{\beta}$ as a positive Hopf basket.  (c) This braid has the $(2,3)$--cable of the $(2,3)$--torus knot as its closure.  (d) The closures of the braids $\beta_n$ are L-space knots that may also fail to be braid positive.}    \label{fig:braids}
\end{figure}
The knot $o9\_30634$ is {\em nearly braid positive} in the sense that  it has a braid presentation that is braid positive except for one strongly quasipositive crossing that jumps over only one strand. We do not know if $o9\_30634$ admits a positive diagram.

\begin{question}
Is every hyperbolic L-space knot nearly braid positive?
\end{question}

\begin{proof} [Proof of Theorem~\ref{thm:braidpositivecensus}]
In the work~\cite{BKM} we obtained braid words for every census L-space knot by automating the process from~\cite{anderson2021lspace}. (An alternative approach is taken in~\cite{RDO21}.)  Here, utilizing the braid and simplification methods in SnapPy \cite{snappy} / Sage \cite{sagemath}, we managed to cajole braid positive presentations for all of the knots except for one, $o9\_30634$. The L-space census knots and positive braids with them as closures are detailed in the Appendix~\ref{appendix} and verified in~\cite{BK}.

As one may check, the knot $K=o9\_30634$ is the closure of the $4$--braid
\[\beta = [2,1,3,2,2,1,3,2,2,1,3,2,-1,2,1,1,2]. \]
Here the list of non-zero integers represents a braid word by letting the integer $k$ stand for the standard generator $\sigma_k$ or its inverse $\sigma_{k}^{-1}$ depending on whether $k$ is positive or negative.

Ito gives new constraints on a suitably normalized version of the HOMFLY polynomial for positive braids~\cite{ito2020note}.
The Ito-normalized HOMFLY polynomial  $\widetilde{P_K}(a,z) = \sum h_{i\, j} \alpha^i z^{2j}$ of $K = \widehat{\beta}$ is represented by the matrix $H= (h_{i\, j})$ of coefficients
\[ H= 
\left(\begin{array}{rrrrrrr}
13 & 69 & 133 & 121 & 55 & 12 & 1 \\
17 & 66 & 83 & 45 & 11 & 1 & 0 \\
4 & 10 & 6 & 1 & 0 & 0 & 0 \\
-1 & -1 & 0 & 0 & 0 & 0 & 0
\end{array}\right)
\]
where the indexing starts at $0\, 0$ so that $h_{0\, 0} = 13$.  One may calculate this with Sage (or the knot theory package \cite{knotatlas} for Mathematica \cite{Mathematica}) from the braid word using the built-in HOMFLY polynomial and adjusting it to achieve Ito's normalization. The computations can be found at~\cite{BK}.

According to~\cite[Theorem 2]{ito2020note}, if a link $K$ is braid positive then the Ito-normalized HOMFLY polynomial should only have non-negative coefficients.  As one observes, the coefficients $h_{3\, 0}$ and $h_{3\, 1}$ are negative.  Hence $o9\_30634$ is not braid positive.
\end{proof}

In Section~\ref{sec:potential_counterexamples}, we generalize the knot $o9\_30634$ to an infinite family of hyperbolic L-space knots that are nearly braid positive but for which Ito's constraints fail to obstruct braid positivity, at least for the examples we managed to calculate.   In Section~\ref{sec:generalized}, we further extend this family to a doubly infinite family of knots $K_{n,m}$ in hopes of providing more potential examples.  While that doesn't quite work out, we highlight several properties of these knots in Proposition~\ref{prop:Knm}.  Notably, we show that
\begin{itemize}
    \item all but $K_{-1,m}$ and further six exceptional cases of these knots are hyperbolic, 
    \item identify a small Seifert fibered space surgery for each, 
    \item determine that when $n\geq 0$ they are L-space knots if and only if $m \leq 0$,  
    \item compute their Alexander polynomials,  and 
    \item examine their structures as positive braids and strongly quasipositive braids.
\end{itemize}
Lastly, in Section~\ref{sec:curiosities} we observe that our infinite family of hyperbolic L-space knots of Section~\ref{sec:potential_counterexamples} have Alexander polynomials that 
\begin{itemize}
    \item induce formal semigroups that are actually semigroups (which Teragaito pointed out to us) and
    \item have all their roots on the unit circle, disproving \cite[Conjecture 1.3]{LiNi}.
\end{itemize}

\section{A family of hyperbolic L-space knots that might not be braid positive}\label{sec:potential_counterexamples}

Let $\{K_n\}$ be the family of knots that are the closures of the braids 
\[\beta_n = [(2,1,3,2)^{2n+1}, -1,2,1,1,2]. \]
and includes our knot $o9\_30634$ as $K_1$. See Figure~\ref{fig:braids}(d). Observe that $\beta_n$ gives a strongly quasi-positive braid presentation for these knots that is {\em almost} braid positive, it is braid positive except for one negative crossing. 

\begin{proposition}
For $n\geq1$, the knots $K_n$ are hyperbolic L-space knots.
\end{proposition}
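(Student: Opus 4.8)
The plan is to establish the two assertions — that each $K_n$ is an L-space knot and that each $K_n$ is hyperbolic — somewhat independently, since they draw on different machinery.

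For the L-space condition, I would exploit the strongly quasipositive braid structure of $\beta_n$ together with a recognizable surgery description. The braids $\beta_n = [(2,1,3,2)^{2n+1}, -1, 2, 1, 1, 2]$ are explicitly given as a one-parameter family, so the natural approach is to identify a Dehn surgery on $K_n$ yielding an L-space (for instance a Seifert fibered space or a lens space) and then invoke the fact that a positive surgery to an L-space certifies the L-space knot property. A clean way to do this is to recognize the $K_n$ as members of a family whose surgeries were already understood — the excerpt foreshadows in Proposition~\ref{prop:Knm} that each $K_{n,m}$ admits a small Seifert fibered space surgery and that for $n\geq 0$ these are L-space knots exactly when $m\leq 0$, so I would aim to show the $K_n$ here sit inside that family at parameter values making them L-space knots, and quote that determination. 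Alternatively, one can argue directly that the $K_n$ are fibered (the positive Hopf-band/Hopf-basket picture of Figure~\ref{fig:braids}(b) for $K_1 = o9\_30634$ suggests each $K_n$ bounds a fiber surface built from plumbed positive Hopf bands) and that the open book monodromy is right-veering with the appropriate structure to force an L-space surgery; but leaning on the already-established surgery classification is cleaner.

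For hyperbolicity, the strategy is to produce an explicit cusped triangulation of the complement $S^3 \setminus K_n$ and verify that it admits a complete hyperbolic structure. Since $\beta_n$ is given as a concrete braid word depending linearly on $n$, I would feed these into SnapPy and certify hyperbolicity rigorously (e.g.\ via \texttt{verify} / interval-arithmetic methods) for small $n$, and then handle all $n$ uniformly by exhibiting the complements as fillings of a single fixed hyperbolic parent manifold. Concretely, the $(2,1,3,2)^{2n+1}$ repetition strongly suggests that the $K_n$ are obtained by Dehn filling along a fixed cusp of a hyperbolic link complement $L$ whose extra component encircles the repeating block; by Thurston's hyperbolic Dehn surgery theorem, all but finitely many fillings are hyperbolic, and one then checks the finitely many exceptional slopes by hand. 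This reduces the infinite family to a finite verification.

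The main obstacle is the uniform hyperbolicity statement for \emph{all} $n\geq 1$ rather than just the computer-checked small cases. The honest difficulty lies in producing the auxiliary link complement $L$ and identifying the filling slopes precisely, then ruling out exceptional (non-hyperbolic) fillings in the relevant range; this is where Thurston's theorem must be combined with an explicit geometric bound or a careful enumeration of exceptional slopes on the filling cusp. I expect the L-space portion to be comparatively routine once the surgery description from Proposition~\ref{prop:Knm} is in place, so the bulk of the work — and the part most likely to require a genuinely new argument rather than a citation — is certifying hyperbolicity in the limit of large $n$.
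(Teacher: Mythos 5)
Your plan is correct and is essentially the paper's own proof: the L-space property is established there by exhibiting the $(8n+6)$-surgery as a Seifert fibered space via a strongly invertible surgery description and the Montesinos trick, then certifying it as an L-space with the Lisca--Stipsicz criterion (the same argument that later powers Proposition~\ref{prop:Knm}, where $K_n = K_{n,0}$). For hyperbolicity the paper does exactly what you sketch, realizing the exteriors as the fillings $L12n1739(1,2n+2)(0,0)(-1,n+1)$ of a fixed hyperbolic $3$-component link and supplying the effective bound you flagged as the main difficulty via the Gromov--Thurston $2\pi$-theorem: SnapPy enumerates the finitely many slopes of length less than $2\pi$ on each cusp, and for $n>1$ the filling slopes all exceed that length.
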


\begin{lemma}\label{lem:Lspaceknots}
For $n\geq1$, the knots $K_n$ are L-space knots.  In particular, the  $8n+6$ surgery on $K_n$ gives the Seifert fibered L-space
$M(-1;  \frac{1}{2}, \frac{2n+1}{4n+4}, \frac{2}{4n+5})$.
\end{lemma}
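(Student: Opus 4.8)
The plan is to prove the two assertions together by exhibiting a single \emph{positive} Dehn surgery on $K_n$ that yields an L-space; by definition this is exactly what it means for $K_n$ to be an L-space knot, and the natural candidate slope is the claimed $8n+6$. So I would first identify the surgered manifold with the asserted Seifert fibered space, and then verify that this Seifert fibered space is an L-space on the correct side.

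For the identification I would convert the closure $K_n$ of $\beta_n$, together with the integral framing $8n+6$, into a framed surgery diagram and simplify it by Kirby calculus. The feature to exploit is the periodic block $(\sigma_2\sigma_1\sigma_3\sigma_2)^{2n+1}$: its regularity is what allows the diagram, after adjoining the surgery curve and performing Rolfsen twists and handle slides, to collapse to a plumbing graph with a central vertex and three legs. The cleaner bookkeeping is likely to run the calculus in reverse, starting from the standard surgery presentation of $M(-1;\frac12,\frac{2n+1}{4n+4},\frac{2}{4n+5})$ as a central unknot carrying the Euler-number framing together with three chains of unknots encoding the continued-fraction expansions of the three Seifert invariants, and then blowing down and twisting until one component becomes an unknot of integral framing $8n+6$. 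The core of that surgery solid torus, traced back into $S^3$, is the dual knot, which I would match to $K_n$ by recovering the braid word $\beta_n$ (or, as a weaker but sufficient check, by matching the Alexander polynomial computed in the later section). Crucially, this calculus is performed once, uniformly, keeping $n$ symbolic through the denominators $4n+4=4(n+1)$ and $4n+5$.

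With the Seifert fibered description in hand, the L-space property follows from the classification of Seifert fibered L-spaces: a Seifert fibered rational homology sphere over $S^2$ is an L-space precisely when its Seifert invariants violate the Jankins--Neumann/Naimi inequalities, i.e.\ when it admits no horizontal (taut) foliation, equivalently no transverse contact structure. I would check that the orbifold data $(-1;\frac12,\frac{2n+1}{4n+4},\frac{2}{4n+5})$ lie in the L-space range for every $n\geq 1$, with the orientation consistent with a positive surgery so that the manifold is an L-space on the correct side. A reassuring consistency check is that the orbifold Euler number is, up to sign,
\[ e = -1 + \frac12 + \frac{2n+1}{4n+4} + \frac{2}{4n+5} = \frac{4n+3}{4(n+1)(4n+5)}, \]
so that $|e|$ times the product $2\cdot(4n+4)\cdot(4n+5)$ of the fiber multiplicities equals $8n+6$; this matches the order of $H_1$ for integral $8n+6$ surgery on a knot in $S^3$, confirming both the homology and the slope. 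Since $8n+6>0$, this exhibits a positive L-space surgery and completes the argument.

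The main obstacle is the explicit, $n$-uniform Kirby calculus in the identification step: reducing a diagram with $2n+1$ repeated blocks to a three-legged plumbing while tracking framings and continued fractions as functions of $n$ is where the genuine work lies, and it is all too easy to slip a sign or an off-by-one in the Euler number. The homology computation above, together with the genus bound $8n+6 \geq 2g(K_n)-1$ expected for an L-space knot, gives an independent and easily computed confirmation that the bookkeeping has come out right.
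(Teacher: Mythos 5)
Your proposal is correct in outline and its second half is essentially the paper's: both reduce the lemma to identifying the $(8n+6)$-surgery with $M(-1;\frac12,\frac{2n+1}{4n+4},\frac{2}{4n+5})$ and then invoke the Lisca--Stipsicz/Lisca--Mati\'c characterization of Seifert fibered L-spaces (the contact/foliation form of the Jankins--Neumann--Naimi inequalities you cite); concretely, with $1-r_2=\frac{2n+3}{4n+4}$ the paper's arithmetic check produces the contradiction $0<2a-m<1$, and since $8n+6>0$ this is a positive L-space surgery, as you say. Where you genuinely differ is the identification step. The paper begins the way you do, trading the $2n+1$ repeated blocks of $\beta_n$ for $-\frac{1}{n+1}$- and $\frac{1}{2n+3}$-surgeries on two unknots (Figure~\ref{fig:surgdesc}), but then, instead of Kirby calculus down to a three-legged plumbing, it exploits the strong invertibility of the resulting three-component surgery description: quotienting by the involution and performing the rational tangle replacements dictated by the coefficients (the Montesinos trick) exhibits the surgered manifold directly as the double branched cover of the Montesinos link $M(\frac{2}{4n+5},\frac12,-\frac{2n+3}{4n+4})$, whose branched cover is the asserted Seifert fibered space since $-\frac{2n+3}{4n+4}=-1+\frac{2n+1}{4n+4}$. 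This sidesteps exactly what you flagged as the main obstacle: in the quotient the parameter $n$ is confined to two twist regions of arcs, so one short isotopy sequence, uniform in $n$, replaces the framing and continued-fraction bookkeeping of an $n$-parametric plumbing reduction, and no dual-knot identification is needed at all. Your reverse-engineered variant (blow down the plumbing to an unknot with framing $8n+6$ and identify the dual knot) would also work, but be aware that the parenthetical fallback of matching Alexander polynomials is \emph{not} sufficient there --- the Alexander polynomial does not determine the knot, so you would be committed to genuinely recovering $\beta_n$; alternatively, run the calculus forward from the framed closure of $\beta_n$, which avoids the issue. Your consistency check $e=\frac{4n+3}{(4n+4)(4n+5)}$, giving $\lvert H_1\rvert = 2(4n+4)(4n+5)\lvert e\rvert = 8n+6$, is correct and is a worthwhile safeguard that the paper does not record.
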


\begin{proof}
Figure~\ref{fig:surgdesc} shows how a strongly invertible surgery description of the knot $K_n$ along with its $8n+6$ surgery may be obtained.  Figure~\ref{fig:monty} demonstrates how one may take the quotient and perform rational tangle replacements associated to the surgeries to produce a link whose double branched cover is $8n+6$ surgery on $K_n$. We observe this link to be the Montesinos link $M(\frac{2}{4n+5}, \frac{1}{2}, -\frac{2n+3}{4n+4})$.   Hence its double branched cover is the Seifert fibered space $M_n(0; \frac{2}{4n+5}, \frac{1}{2}, -\frac{2n+3}{4n+4})$.  Here we use the notation of Lisca--Stipsicz~\cite{LS}
where the Seifert fibered space $M(e_0;r_1, r_2, \dots, r_k)$ is obtained by $e_0$--surgery on an unknot with $k$ meridians having $-1/r_i$ surgery on the $i$th one.

\begin{figure}
    \centering
    \includegraphics[width=\textwidth]{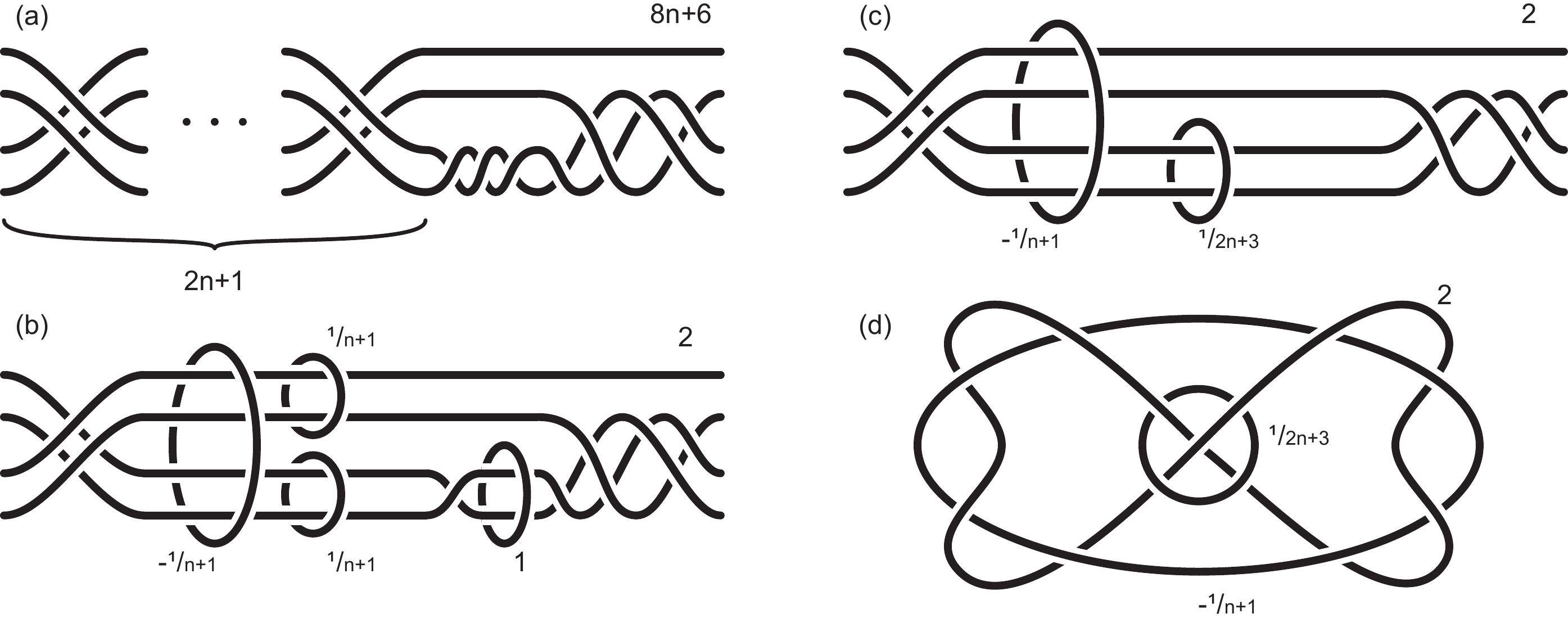}
    \caption{(a) The braid $\beta_n$ with a surgery coefficient of $8n+6$ for its closure knot $K_n$. (b) \& (c) Twists in the braid are expressed and collected into surgeries on unknots.  The surgery coefficient on the closure knot is adjusted accordingly. (d) After closure and isotopy, we obtain a surgery description for $8n+6$ surgery on $K_n$.}
    \label{fig:surgdesc}
\end{figure}

\begin{figure}
    \centering
    \includegraphics[width=\textwidth]{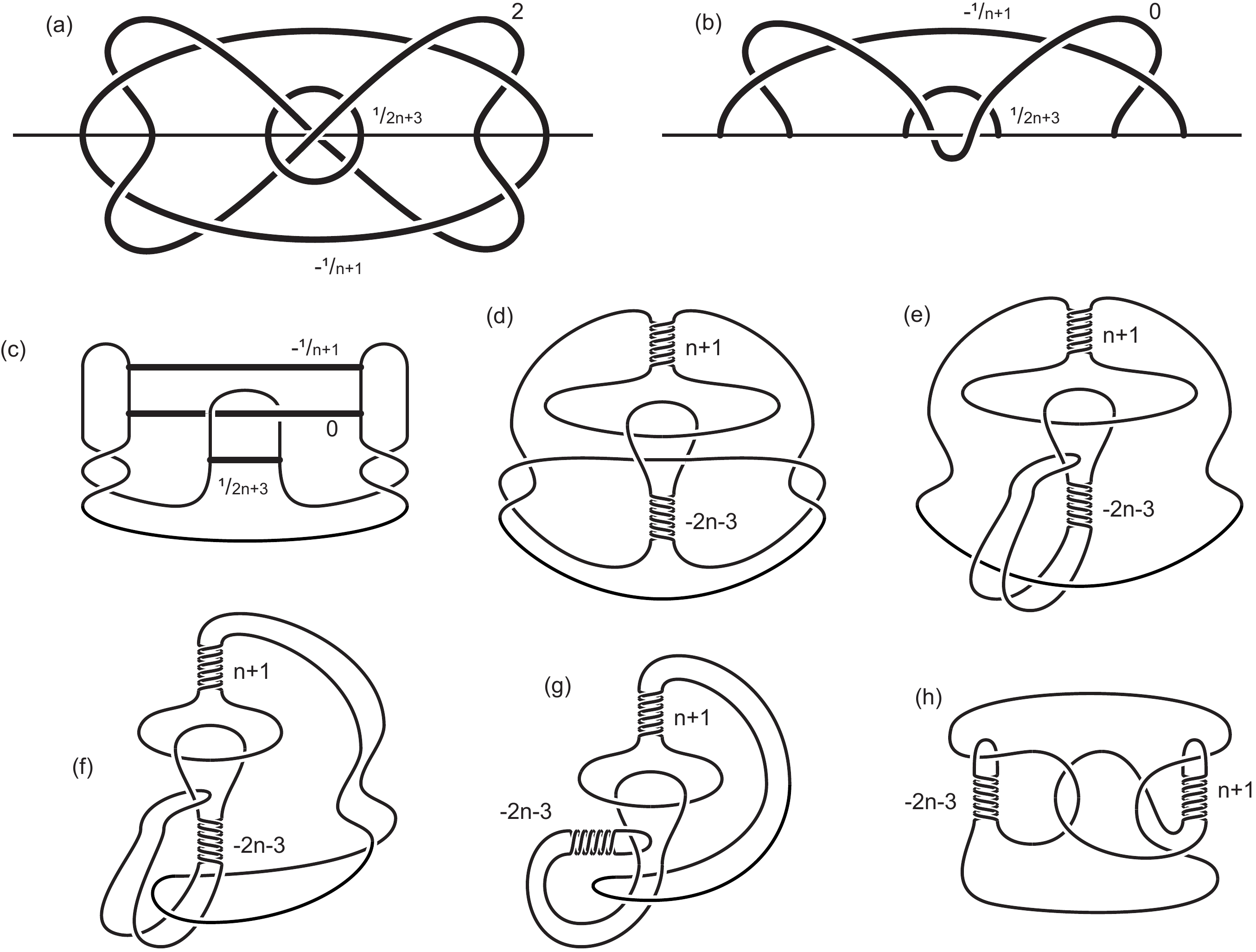}
    \caption{(a) The surgery description from Figure~\ref{fig:surgdesc}(d) is strongly invertible. (b)\&(c) The quotient of the surgery description followed by some isotopy to straighten the arcs. (d) Rational tangle replacements along the arcs produce a link whose double branched cover is $8n+4$ surgery on $K_n$. (e)--(h) A sequence of isotopies show this link is the Montesinos link $M([0,-2n-3,-2],[0,-2], [0,1,-1,n+1,2]) = M(\frac{2}{4n+5}, \frac{1}{2}, -\frac{2n+3}{4n+4})$.}
    \label{fig:monty}
\end{figure}

These Seifert fibered spaces are determined to be L-spaces via~\cite[Theorem~1]{LS}.  
More specifically, 
Lisca--Stipsicz~\cite[Theorem 1]{LS} shows the Seifert fibered space $M=M(e_0;r_1, r_2, r_3)$ (with $1 \geq r_1\geq r_2\geq r_3 \geq 0$) is an L-space if and only if either $M$ or $-M$ does not carry a positive transverse contact structure. Then by Lisca--Mati\'c~\cite{LM}, such a Seifert fibered space $M$ carries no positive transverse contact structure if and only if either $e_0\geq0$ or $e_0=-1$ and there exists no coprime integers $a$ and $m$ such that $mr_1<a<m(1-r_2)$ and $mr_3<1$.

Rewriting to apply~\cite[Theorem 1]{LS}, we obtain that $M_n = M(-1;  \frac{1}{2}, \frac{2n+1}{4n+4}, \frac{2}{4n+5})$.
Then, since $1-r_2 = \frac{2n+3}{4n+4}$, we assume for contradiction that there are coprime integers $a$ and $m$ such that $m\frac12<a<m\frac{2n+3}{4n+4}$ and $m \frac{2}{4n+5} <1$.
The first gives
\[      0 < 2a-m < \frac{m}{2n+2}.   \]
%\[    m\frac12<a<m\frac{2n+3}{4n+4}  \qquad \mbox{ which gives } \qquad 0 < a-\frac{m}{2} < \frac{m}{4n+4}. \]
The second implies $m < 2n+2 +\frac12$ so that $m \leq 2n+2$ and 
\[\frac{m}{2n+2} \leq 1.  \]
Together, they yield $0 < 2a - m <1$.
However, since $2a-m$ is an integer, there are no pairs of integers $a,m$ that satisfy this equation.  This is a contradiction.

Therefore $M_n$ carries a positive transverse contact structure, and thus it is an L-space.  Hence $K_n$ is an L-space knot for each $n \geq 1$.

% Then, since $1-r_2 = \frac{2n+3}{4n+4}$, assume for contradiction that there are coprime integers $a$ and $m$ such that
% \[    m\frac12<a<m\frac{2n+3}{4n+4} \,\text{ and }\, m \frac{2}{4n+5}<1.  \]
% Since $m$ is an integer the second inequality becomes $m\leq2n+2$ and the first simplifies to
% \begin{equation*}
%     0<\frac{2a-m}{m}<\frac{1}{2+2n}
% \end{equation*}
% which cannot be fulfilled simultaneously.

% This shows that $M_n$ carries a positive transverse contact structure, and thus it is an L-space.  Hence $K_n$ is an L-space knot for each $n \geq 1$.
\end{proof}

% In their notation our family is 
% $M_n=SFS(-2;\frac{3+4n}{5+4n},\frac{3+2n}{4+4n},\frac12)$.
% % slope_n=(-(6+8*n),1) that
% % Braid_n(slope_n)=SFS [S2: (2,1) (4+4*n,1+2*n) (5+4*n,-(3+4*n))]
% Hence each knot $K_n = \widehat{\beta_n}$ is a nearly braid positive L-space knot.  
\begin{lemma}
For $n\geq1$ the knots $K_n$ are hyperbolic.
\end{lemma}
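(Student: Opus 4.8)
The plan is to exhibit $\{K_n\}$ as a twisting family and then invoke Thurston's hyperbolic Dehn surgery theorem. Reading off Figure~\ref{fig:surgdesc}, the repeated block $(2,1,3,2)^{2n+1}$ is governed by a single twist region, so there is a fixed two-component link $L = K_1 \cup C$, with $C$ an unknot encircling that region, such that performing $1/k$ surgery on $C$ (equivalently, inserting $k$ full twists along the disk bounded by $C$, each full twist contributing a $(2,1,3,2)^2$ factor and thereby preserving the parity of the exponent) carries $K_1$ to $K_{1+k}$. Since $C$ is unknotted, each such surgery returns $S^3$, and the complement of the remaining component is precisely $S^3 \setminus K_{1+k}$. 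Thus hyperbolicity of the knot $K_{1+k}$ is equivalent to hyperbolicity of the $1/k$--filling, along the $C$--cusp, of the fixed manifold $S^3 \setminus L$.

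First I would establish that the parent link complement $S^3 \setminus L$ is hyperbolic. As $L$ is a single fixed link, this need only be checked once: I would do so with a verified SnapPy computation, or alternatively argue directly that $L$ is neither a torus link nor a satellite link and that its complement contains no essential spheres, tori, or annuli. The fact that $K_1 = o9\_30634$ is already known to be hyperbolic from the census provides a useful consistency check that the twist region and the curve $C$ have been identified correctly.

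Granting hyperbolicity of $S^3 \setminus L$, Thurston's hyperbolic Dehn surgery theorem gives that all but finitely many $1/k$--fillings of the $C$--cusp are hyperbolic, hence $K_n$ is hyperbolic for all but finitely many $n$. To upgrade this to every $n \geq 1$, I would control the exceptional slopes explicitly: the slopes $1/k$ have geometric length tending to infinity, so by the $6$--theorem of Agol and Lackenby only slopes of small length can fail to yield a hyperbolic filling, confining the possible exceptions to a bounded, explicitly computable set of small $|k|$. The finitely many surviving knots $K_n$ are then checked one at a time to be hyperbolic with SnapPy, with $K_1$ recognized directly in the census.

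The main obstacle is exactly the passage from \emph{all but finitely many} to \emph{all} $n \geq 1$: Thurston's theorem is non-effective about which fillings degenerate, so the crux is to bound the exceptional slopes effectively and then dispatch each remaining small case by a direct computation. A secondary technical point is to verify rigorously, from the surgery manipulations of Figure~\ref{fig:surgdesc}, that the family is genuinely realized as twisting along a single unknotted curve $C$, so that the relevant filling slopes are exactly $1/k$; once that identification is secured, the proof reduces to one hyperbolicity check on $L$ together with a finite verification.
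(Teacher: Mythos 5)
Your overall strategy is the paper's strategy: realize the $K_n$ as Dehn fillings of one fixed hyperbolic parent link, verified once in SnapPy; upgrade Thurston's non-effective ``all but finitely many'' to ``all $n\geq 2$'' by an effective length bound on exceptional slopes (you via the $6$-theorem, the paper via the Gromov--Thurston $2\pi$-theorem applied to the short slopes of length less than $2\pi$ computed in SnapPy); and dispose of $n=1$ directly, since $K_1 = o9\_30634$ is a hyperbolic census knot. So the architecture is sound and matches the paper.

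However, the specific parent you propose does not exist, and this is more than the ``secondary technical point'' you flag: there is no single unknotted circle $C$ for which $1/k$ surgery inserts a $(2,1,3,2)^{2k}$ factor. A full twist along the disk bounded by a circle encircling the four braid strands inserts $(\sigma_1\sigma_2\sigma_3)^{\pm 4}$, and in $B_4$ one has $(\sigma_1\sigma_2\sigma_3)^{4} = (\sigma_2\sigma_1\sigma_3\sigma_2)^{2}\,\sigma_1^{2}\sigma_3^{2}$, so the desired $2$-cabled full twist comes bundled with internal twists $\sigma_1^{\pm 2}\sigma_3^{\pm 2}$ in the two pairs of strands, which change the closure. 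This is exactly why the paper's surgery description (Figure~\ref{fig:surgdesc}) needs \emph{two} auxiliary unknots --- a curve $c$ with $\lk(K,c)=4$ carrying the cabled twisting and a curve $c'$ with $\lk(K,c')=2$ absorbing the internal framing corrections --- so that the parent is the $3$-cusped link $L12n1739$ and $K_n$ has the same exterior as the filling $L12n1739(1,2n+2)(0,0)(-1,n+1)$, with \emph{both} filling slopes varying in $n$. Once you replace your $2$-component parent by this $3$-component one, your argument goes through essentially verbatim: Thurston's theorem and the $6$-theorem (or the $2\pi$-theorem, as in the paper) apply to simultaneous fillings of several cusps, both slope families leave every bounded set as $n\to\infty$, and the finitely many residual cases reduce to the single check $n=1$. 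At that point your proof coincides with the paper's, which verifies that for $n>1$ both filling slopes miss the computed lists of slopes of length less than $2\pi$.
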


\begin{proof} 
% Could try to check their Alexander polynomials via Burau representations.  We'd see that they're not of the form of a torus knot of braid index $\leq 4$.

% Perhaps can also see that they don't have form of the Alexander poly of a satellite.  A satellite would have to be $\Delta_K(t) = \Delta_C(t^a) \Delta_P(t)$.   \ken{Well, these seem to have the pattern $(\sum (-1)^i t^{2i}) (\sum (-1)^j t^j)$ which doesn't help us... I reckon they're concordant to a $(2,n)$ cable of a $(2,m)$ torus knot so I should've seen that.}  Maybe a more geometric argument.  Can the satellite torus meet the braid axis for a tight fibered knot?  This is what Birman-Menasco do in "Special positions for essential Tori in link complements".  See also Nutt "Embedding knots and links in an open book III".  We can see that the braid axis is transverse to the fibration.  A fibered satellite must have fibered pattern and companion.    I suppose we now know satellite L-space knots have braided patterns
% \ken{Should be able to say non-torus due to the SFS obtained.  Also should be able to show that it's not a satellite knot. (Must a tight fibered satellite knot of braid index 4 be a $(2,n)$-cable of a $(2,m)$--torus knot?  Hence must be hyperbolic.}

% \mk{We can also use the surgery description. We can use SnapPy to verify that the surgery link is hyperbolic and check the length of the slopes yielding the family. Here is the argument:}
We check that $L12n1739(1,2n+2)(0,0)(-1,n+1)$ has the same exterior as $K_n$. Via SnapPy we verify that $L12n1739$ is hyperbolic and compute its short slopes of length less than $2\pi$ as
\begin{align*}
    &[(1, 0), (-2, 1), (-1, 1), (0, 1), (1, 1), (-1, 2), (1, 2), (-1, 3)],\\
 &[(1, 0), (-5, 1), (-4, 1), (-3, 1), (-2, 1), (-1, 1), (0, 1), (1, 1),(-5, 2),
  (-3, 2)],\\
 &[(1, 0), (-2, 1), (-1, 1), (0, 1), (1, 1), (2, 1), (-1, 2), (1, 2)].
\end{align*}
Thus for $n>1$ we fill with slopes longer than $2\pi$ and therefore directly get hyperbolic manifolds by Gromov's and Thurston's $2\pi$-Theorem (see for example~\cite[Theorem~9]{BleilerHodgson96}).
\end{proof}

%Teragaito suggested an alternative approach to this lemma that does not use SnapPy~\cite{teragaito}.
%The Alexander polynomial of $K_n$ (see Proposition~\ref{prop:Knm} \ref{alex}) tells us that it is neither a torus knot nor a $2$--cable.
%As knots in $S^3$ are either torus knots, satellite knots, or hyperbolic knots, we now observe that if $K_n$ were a satellite, then it must \mk{cannot} be a $2$--cable.
%An unknotting tunnel put at the unique negative crossing for $K_n$ shows that $K_n$ has tunnel number one.
%Since the bridge index of $K_n$ is at most $4$, if it were a satellite,
%then Morimoto-Sakuma’s classification of tunnel number one satellite knots~\cite{morimoto-sakuma} would tell us that $K_n$ would have $T(2,q)$ as a companion and a pattern of wrapping number $2$.
%Since $K_n$ has a small Seifert fibered surgery, the pattern must be a $2$--cable. Hence $K_n$ would be a $2$--cable knot.

However the constraints of Ito on Homfly polynomials appear to not obstruct $K_n$ from being braid positive when $n\geq 2$.  Using Sage for computations, we see that Ito's constraints on the HOMFLY polynomials of $K_n$ for $n =2, \dots, 10$ do not obstruct braid positivity for these knots.  
Furthermore, we have been unsuccessful in finding a braid positive presentation for these knots.

\begin{question}
Are the knots $K_n$ for $n\geq2$ braid positive?
\end{question}

\section{A doubly infinite family of knots}\label{sec:generalized}

\begin{figure}
    \centering
    \includegraphics[width=\textwidth]{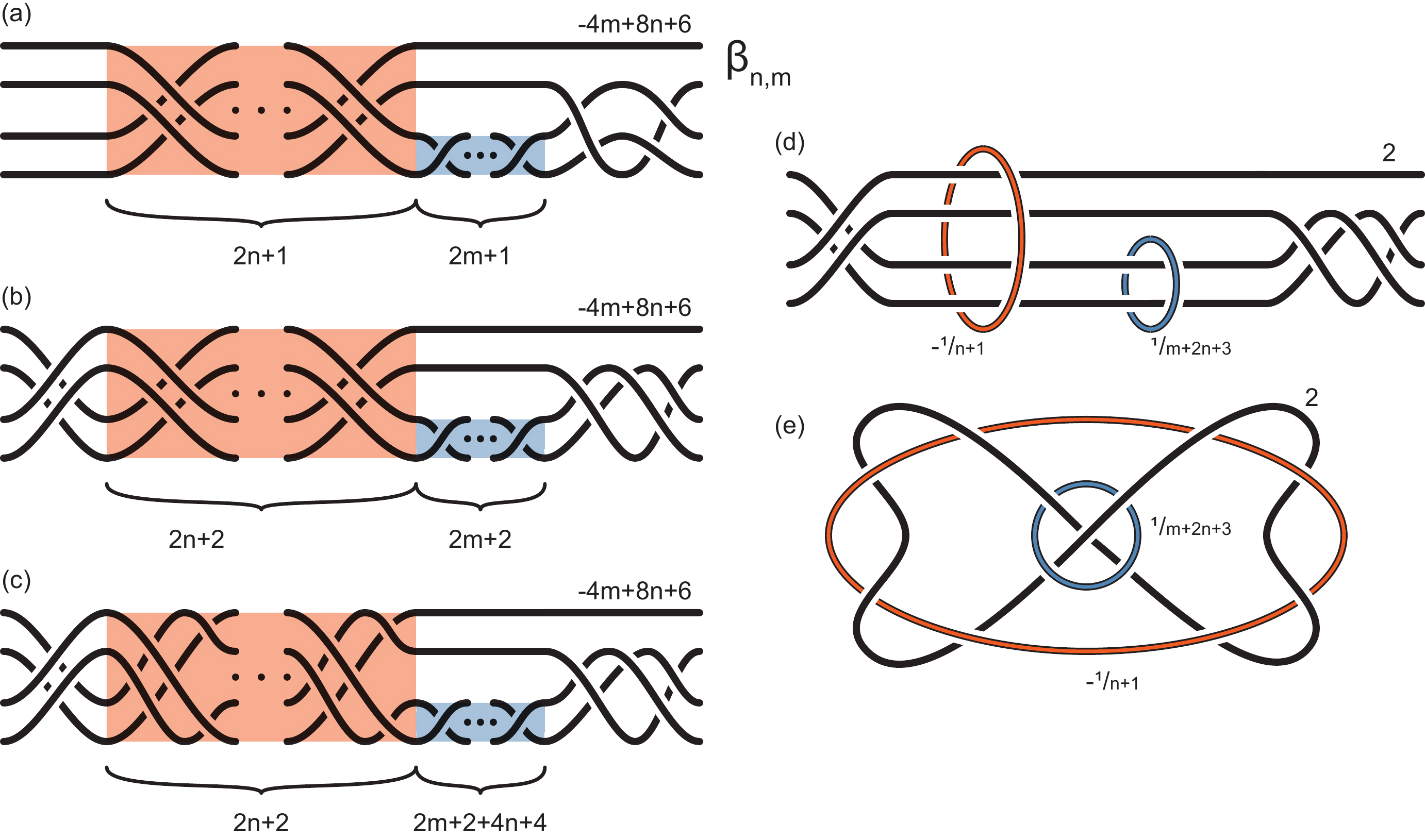}
    \caption{(a) The braid $\beta_{n,m}$ with a surgery coefficient of $-4m+8n+6$ for its closure knot $K_{n,m}$. (b) \& (c) \& (d) Twists in the braid are expressed and collected into surgeries on unknots.  The surgery coefficient on the closure knot is adjusted accordingly. (e) After closure and isotopy, we obtain a surgery description for $8n+6$ surgery on $K_{n,m}$.}
    \label{fig:gen-surgdesc}
\end{figure}

\begin{figure}
    \centering
    \includegraphics[width=\textwidth]{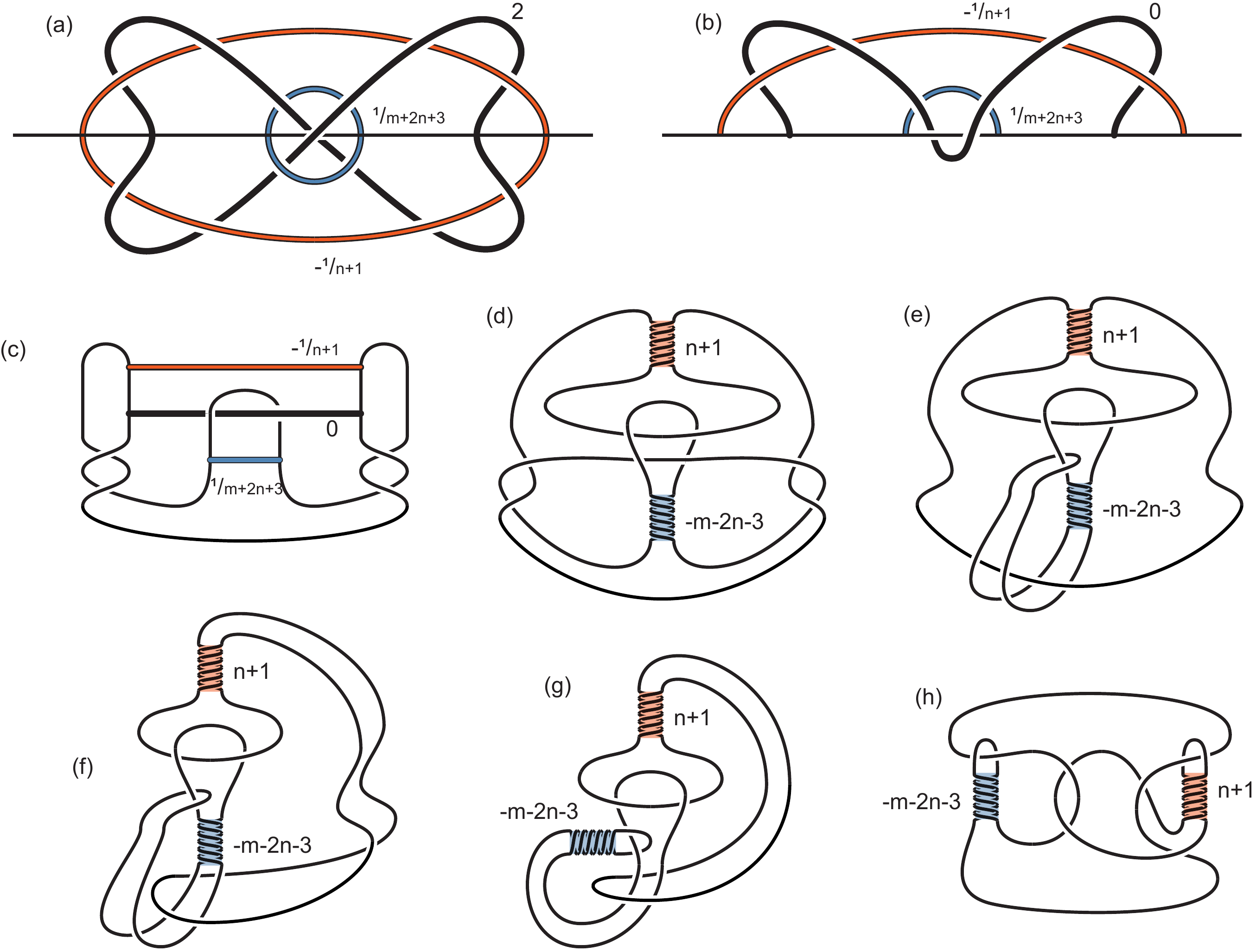}
    \caption{(a) The surgery description from Figure~\ref{fig:gen-surgdesc}(e) is strongly invertible. (b)\&(c) The quotient of the surgery description followed by some isotopy to straighten the arcs. (d) Rational tangle replacements along the arcs produce a link whose double branched cover is $-4m+8n+4$ surgery on $K_{n,m}$. (e)--(h) A sequence of isotopies show this link is the Montesinos link $M([0,-m-2n-3,-2],[0,-2], [0,1,-1,n+1,2]) = M(\frac{2}{2m+4n+5}, \frac{1}{2}, -\frac{2n+3}{4n+4})$.}
    \label{fig:gen-monty}
\end{figure}

\begin{figure}
    \centering
    \includegraphics[width=.7\textwidth]{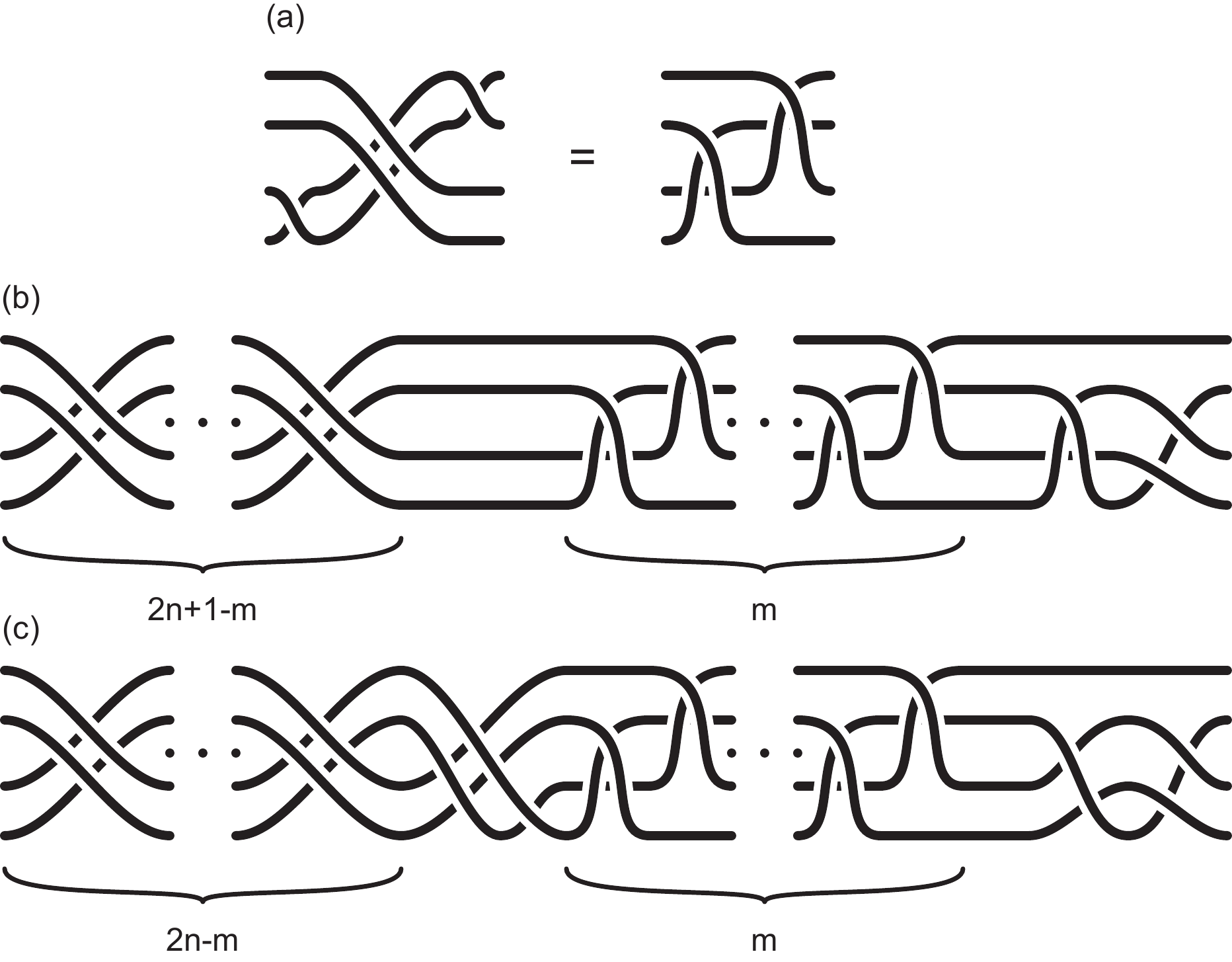}
    \caption{The proof of Proposition~\ref{prop:Knm}~(\ref{sqpgenusfibered})}
    \label{fig:SQPbraids}
\end{figure}

%\ken{Perhaps it's worth discussing the 2 parameter family of knots and their properties.}
From our description of the family of knots $K_n$ in Figure~\ref{fig:surgdesc}, one finds a natural $2$--parameter family generalization.  While one may initially hope this family yields further examples of hyperbolic L-space knots that fail to be braid positive, we show this is not the case.

\begin{proposition}\label{prop:Knm}
Let $\beta_{n,m}$ be the braid indicated in Figure~\ref{fig:gen-surgdesc}(a), and let $K_{n,m} = \widehat{\beta}_{n,m}$ be its closure.
\begin{enumerate}
    \item\label{hyp} $K_{n,m}$ is a hyperbolic knot for all $(n,m) \in \Z^2$ except for the pairs 
    \[(n,m) \in \big\{(-1,k)\,\big\vert\, k\in\Z\big\}\cup\big\{(0,0),(0,-1),(0,-2),(-2,1),(-2,0),(-2,-1)\big\}.\] 
    For each of these pairs, $K_{n,m}$ is a torus knot.
    
    \item\label{sfssurg} $(8n+6-4m)$--surgery on $K_{n,m}$ gives the Seifert fibered space 
    \[M\left(-1; \frac12, \frac{2n+1}{4n+4}, \frac{2}{4n+5+2m}\right).\]
%    $M(-1;\frac12, \frac{2(n+1)-1}{4(n+1)}, \frac{2}{2(m+2n+3)+1})$.

    \item\label{alex} The Alexander polynomial of $K_{n,m}$ is
    \[\Big(t^{m-1}\sum_{i=0}^{n} (t^{-4i-1} - t^{-4i})\Big) + 
    \Big( (-1)^m \sum_{j=-m}^{m} (-t)^j\Big) +
    \Big(t^{1-m}\sum_{k=0}^{n} (t^{4k+1}-t^{4k})\Big).\]
    %\ken{has some sort of relationship} to the $(xx,xx)$ cable of the $(2,xxx)$ torus knot. \ken{Maybe just same Alexander polynomial??  I don't see concordance.}

    \item\label{lsp}  Assume $n \geq 0$.  Then $K_{n,m}$ is an L-space knot if and only if $m\leq 0$.

    \item\label{posbraidgenus} If $n\geq 0$ and $m < 0$, then $\beta_{n,m}$ is a positive braid and  $K_{n,m}$ is a braid positive knot of genus $ |m|+4n+3$ 
    
    \item\label{sqpgenusfibered}  If  $2n+1\geq m \geq 0$, then $\beta_{n,m}$ is conjugate to a strongly quasipositive braid and $K_{n,m}$ is a strongly quasipositive knot of genus $4n-m+2$. 
    \begin{enumerate}
        \item If $2n\geq m\geq 0$ then $K_{n,m}$ is a fibered strongly quasipositive knot.    Moreover it is a Hopf plumbing basket. 
        
        \item If $2n+1= m > 0$ then $K_{n,m}$ is a non-fibered strongly quasipositive knot.
    \end{enumerate}

\end{enumerate}
\end{proposition}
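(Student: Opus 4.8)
The plan is to handle the six assertions separately, leaning throughout on the observation that this two-parameter family is assembled from exactly the surgery and branched-cover machinery already used for $K_n = K_{n,0}$ in Lemma~\ref{lem:Lspaceknots}. Part~(\ref{sfssurg}) I would prove by running that argument verbatim with the parameter $m$ carried along: the surgery description in Figure~\ref{fig:gen-surgdesc}(e) is strongly invertible, its quotient followed by the rational tangle replacements of Figure~\ref{fig:gen-monty} yields the Montesinos link $M(\tfrac{2}{2m+4n+5},\tfrac12,-\tfrac{2n+3}{4n+4})$, so the double branched cover, which is $(8n+6-4m)$--surgery on $K_{n,m}$, is the claimed Seifert fibered space. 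The ``if'' half of part~(\ref{lsp}) then follows as in Lemma~\ref{lem:Lspaceknots}: applying Lisca--Stipsicz and Lisca--Mati\'c to $M\big(-1;\tfrac12,\tfrac{2n+1}{4n+4},\tfrac{2}{4n+5+2m}\big)$, the first inequality is unchanged and gives $0<2a-p<\tfrac{p}{2n+2}$, while $p\,r_3<1$ now reads $p<2n+\tfrac52+m$, so for $m\le0$ we get $p\le 2n+2$ and hence the impossible $0<2a-p<1$; thus $M$ has a positive transverse contact structure and the positive $(8n+6-4m)$--surgery is an L-space, making $K_{n,m}$ an L-space knot.

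For the ``only if'' half ($m>0$) the surgery route is unattractive, since one would have to control both $M$ and $-M$ and the surgery slope need not reach $2g-1$; I would instead invoke part~(\ref{alex}) and the Ozsv\'ath--Szab\'o constraint that an L-space knot has Alexander polynomial with coefficients in $\{-1,0,+1\}$ and alternating signs. Reading the closed form in~(\ref{alex}), the three blocks overlap once $m>0$: for instance at $m=2n+1$ the middle block and the third block each contribute $+1$ to $t^{2n+1}$, forcing leading coefficient $2$, and more generally some coefficient acquires absolute value $\ge2$, violating the constraint. To establish~(\ref{alex}) itself I would compute the Alexander polynomial either from the reduced Burau matrix of $\beta_{n,m}$ or from the explicit fiber/Seifert surface read off the braid, and confirm the stated expression by induction on $n$ over the $(2,1,3,2)$ blocks after isolating the $m$--dependent piece; this is routine bookkeeping but is the step most vulnerable to sign errors.

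Hyperbolicity, part~(\ref{hyp}), follows the template of the hyperbolicity lemma for $K_n$: realize every $K_{n,m}$ as a Dehn filling of a single fixed hyperbolic link whose filling slopes vary affinely in $(n,m)$, tabulate with SnapPy the finitely many slopes of length at most $2\pi$, and apply the $2\pi$--Theorem to conclude hyperbolicity whenever all filling slopes are long. The residual cases are the degenerate line $n=-1$, where $\tfrac{2n+1}{4n+4}$ blows up and the exterior visibly Seifert fibers, together with the six listed pairs; each I would treat by hand, recognizing the knot as a torus knot from the explicit surgery picture or by matching invariants. Pinning down \emph{exactly} this exceptional set and certifying the torus-knot identifications is, I expect, the fussiest part of the whole proposition.

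Finally, parts~(\ref{posbraidgenus}) and~(\ref{sqpgenusfibered}) are read off the braid. For $n\ge0$ and $m<0$ the word $\beta_{n,m}$ is manifestly positive, so $K_{n,m}$ is braid positive and, as a positive braid closure, fibered; its genus comes from $2g-1=c-s$ with $c$ the crossing number and $s$ the strand number of $\beta_{n,m}$, which I would verify equals $|m|+4n+3$. For $0\le m\le 2n+1$ I would exhibit the conjugating isotopy of Figure~\ref{fig:SQPbraids} rewriting $\beta_{n,m}$ as a strongly quasipositive band word, whence by Rudolph the quasipositive Seifert surface is genus-minimizing and an Euler-characteristic count gives genus $4n-m+2$. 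For the fibering dichotomy, when $2n\ge m\ge0$ I would identify the surface as an iterated plumbing of positive Hopf bands, a Hopf plumbing basket, hence a fiber; and when $m=2n+1$ the leading coefficient of the Alexander polynomial in~(\ref{alex}) is $2$, so the knot fails to be monic and is therefore not fibered.
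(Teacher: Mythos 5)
Your ``if'' direction in part~(\ref{lsp}) has a genuine gap. You apply the Lisca--Stipsicz/Lisca--Mati\'c criterion to $M\big(-1;\tfrac12,\tfrac{2n+1}{4n+4},\tfrac{2}{4n+5+2m}\big)$ as though the normalization $1\ge r_1\ge r_2\ge r_3\ge 0$ held for every $m\le 0$, and in particular you divide $p\,r_3<1$ by $r_3$ to read off $p<2n+\tfrac52+m$. But the third coefficient $\tfrac{2}{2(2n+m+1)+3}$ leaves the required range as soon as $2n+m+1\le 0$: when $2n+m+1=0$ it equals $\tfrac23>\tfrac12$, so the Seifert invariants must be reordered and the inequalities to check change; when $2n+m+1=-1$ it equals $2\ge1$; when $2n+m+1=-2$ it equals $-2$; and when $2n+m+1\le-3$ it lies in $[-1,0)$, so $r_3<0$ and your division flips the inequality --- the criterion in the form you quote does not apply at all. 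Already for $n=0$, $m=-1$ your inequality chain is false, so the gap is not confined to extreme parameters. The paper's proof spends most of its length on exactly this: the regime $2n+m+1\ge1$ is your argument; $2n+m+1=0$ is handled via the reordered form $M(-1;\tfrac23,\tfrac12,\tfrac{2n+1}{4n+4})$; the cases $2n+m+1=-1,-2$ degenerate to lens spaces (hence L-spaces for free); and for $2n+m+1\le-3$ one must renormalize to $e_0=-2$, observe that $M$ then \emph{does} carry a positive transverse contact structure, and run the criterion on $-M$ instead, in two further sub-cases. These regimes genuinely change the shape of the verification, so your uniform inequality cannot be patched without reproducing that case analysis. (Even inside your regime the strict ordering $r_2>r_3$ can fail at the margin --- e.g.\ $n=1$, $m=-2$ gives $\tfrac38<\tfrac25$ --- which is harmless in substance but shows the role-assignment of the coefficients must be checked, not assumed.)

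Elsewhere you essentially follow the paper: parts~(\ref{hyp}), (\ref{sfssurg}), (\ref{posbraidgenus}) and (\ref{sqpgenusfibered}) match its proof, including the non-monic Alexander polynomial argument for $m=2n+1$; for the Hopf-basket claim the paper does not directly exhibit a plumbing but conjugates to an SQP braid containing the dual Garside element $\delta=\sigma_3\sigma_2\sigma_1$ and cites Banfield, so your ``identify the surface as an iterated plumbing'' step needs either that citation or an actual plumbing decomposition. For part~(\ref{alex}) you take a genuinely different route: the paper computes the three-variable Alexander polynomial of the fixed link $L=K\cup c\cup c'$ once, then obtains every $\Delta_{K_{n,m}}$ simultaneously by the surgery-induced change of meridians together with the Torres formula, namely $\Delta_{K_{n,m}}(t)=\tfrac{t-1}{(t^4-1)(t^2-1)}\,\Delta_L\big(t,t^{4(n+1)},t^{-2(m+2n+3)}\big)$; your Burau-matrix-plus-induction proposal is feasible but, as you concede, sign-fragile, and since both (\ref{lsp}) and (\ref{sqpgenusfibered}) lean on the exact coefficients produced in (\ref{alex}), the Torres approach is the safer and more uniform choice.
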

%$16y-4z+2 = 16(n+1)-4(m+2n+3)+2 = 8n+6-4m$

% \ken{ So the Alexander polynomials of the knots $K_n$ are    
%     \[1 +    \sum_{i=0}^{n} \Big((t^{4i+2}+t^{-(4i+2)})-(t^{4i+1}+t^{-(4i+1)})\Big)\]
%     which appears to be a product of cyclotomics since roots all examples I've checked are on the unit circle.  To show it's a product of distinct cyclotomics, just need to show can multiply by some other polynomial in $\Z[t]$ to get $t^N-1$ for some $N$.
%     }\mk{Is this still needed?}
%     \ken{no, I suppose not.  Dunfield had mentioned a something to me about cyclotomics... Not sure what to do with it.}

\begin{proof}
(\ref{hyp}) Since the surgery description of $K_{n,m}$ given in Figure~\ref{fig:gen-surgdesc}(e) is on a hyperbolic link, using the $2\pi$ Theorem a couple of times yields a finite list of pairs $(n,m)$ for which $K_{n,m}$ might not be hyperbolic.  A further check in SnapPy confirms that all but 5 of them are hyperbolic.  These remaining 5 are readily confirmed to be torus knots. The computations are displayed at~\cite{BK}. 

\medskip

(\ref{sfssurg}) Figure~\ref{fig:gen-surgdesc} shows how to obtain a surgery description on a 3-component link for $-4m+8n+6$ surgery on $K_{n,m}$.   Figure~\ref{fig:gen-monty} uses the Montesinos trick to exhibit the result of this surgery description as the double branched cover of the Montesinos link $M([0,-2,-m-2n-3],[0,-2],[0,1,-1,n+1,2])$. 
This double branched cover is the Seifert fibered space $M(\frac12, -\frac{2n+3}{4n+4}, \frac{2}{4n+5+2m})$ which is equivalent to $M(-1; \frac12, \frac{2n+1}{4n+4}, \frac{2}{4n+5+2m})$.

\medskip

(\ref{posbraidgenus})  When $n\geq0$ and $m<0$, the braid $\beta_{n,m}$ as described in Figure~\ref{fig:gen-surgdesc}(a) is expressly a positive braid.  One counts that it is a braid of index four and $4(2n+1)+(1-2m)+4$ crossings.  Hence $\chi(K_{n,m}) = -(2|m|+8n+5)$ and $g(K_{n,m}) = |m|+4n+3$.
%$\chi(K_{n,m}) = 4 - (2|m|+8n+9) = -(2|m|+8n+5)$

\medskip

(\ref{sqpgenusfibered})   
When $0\leq m \leq 2n+1$, through braid isotopy and braid conjugacy, we may isotop in pairs $2m$ of the $2m+1$ negative crossings over to $m$ of the $2n+1$ copies of the ``2-cabled'' positive crossing that appear in $\beta_{n,m}$ so that they appear as in the left hand side of Figure~\ref{fig:SQPbraids}(a).  Hence by a further braid isotopy as indicated by Figure~\ref{fig:SQPbraids}, each of these $2m$ negative crossings contributes to a SQP band.  The final negative crossing also contributes to a SQP band towards the end of the braid, ultimately giving us the strongly quasipositive braid shown in Figure~\ref{fig:SQPbraids}(b) to which $\beta_{n,m}$ is conjugate.
%$\beta_{n,m}$ is conjugate to a strongly quasipositive braid.  
One counts that the braid index is $4$ and there are $2m+1$ SQP bands and $4(2n+1-m)+2$ regular crossings.  Hence $\chi(K_{n,m}) = -(8n-2m+3)$  and $g(K_{n,m}) = 4n-m+2$.

% \ken{Note $2g-1 = 8n-2m+3$ but the SFS surgery coeff is $8n-4m+6$.
% So $8n-4m+6 \geq 2g-1 = 8n-2m+3$ implies $3 \geq 2m$.  Hence the SFS cannot be an L-space if $m>1$.  So we should check the cases where $m=1$.   Does the Alexander polynomial obstruct being an L-space knot for $m\geq 2$?   Can compute multivariable Alexpoly and use surgery description.}

%$\chi(K_{n,m}) = 4-(2m+1 + 4(2n+1-m)+2) = -(8n+4-4m+2+2m+1-4) = -(8n-2m+3)$  
% 1-2g=-(8n-2m+3)
% 8n-2m+4=2g
% 4n-m+2=g

Furthermore, when $0\leq m \leq 2n$ so that $2n-m \geq 0$, we may instead perform braid isotopy and conjugation to arrive at the strongly quasipositive braid shown in Figure~\ref{fig:SQPbraids}(c).  This braid however contains the `dual Garside element' $\delta = \sigma_3 \sigma_2 \sigma_1$. Hence, as Banfield points out~\cite{banfield2016strongly}, the closure of such a SQP braid is fibered and a Hopf basket.  %not sure if this is getting published...

When $m=2n+1$, we have that the braid $\beta_{n,2n+1}$ is conjugate to a SQP braid but its closure $K_{n,2n+1}$ might not be fibered.  Indeed, we find that the Alexander polynomial of $K_{n,2n+1}$ is not monic, so the closure is not fibered.
Explicitly, from our computations of $\Delta_{K_{n,m}}$ for item (\ref{alex}) below we have
\begin{align*}
    \Delta_{K_{n,2n+1}}(t) &= \frac{t-1}{(t^4-1)(t^2-1)} \cdot t(t^2-1)(2-t+t^2+t^{4n+3}-t^{4n+4}+2t^{4n+5})\\
    &=t\frac{(2t^{4n+6}-2t^2)-(3t^{4n+5}-3t)+ (2t^{4n+4}-2)-(t^{4n+3}-t^3) }{t^4-1}\\
    &\doteq  \frac{(2-3t+2t^2)(t^{4(n+1)}-1)-t^3(t^{4n}-1)}{t^4-1} \\
    &=\frac{(2-3t+2t^2)(t^{4(n+1)}-t^{4n}+t^{4n}-1)-t^3(t^{4n}-1)}{t^4-1}\\ 
    &=(2-3t+2t^2)t^{4n}+(2-3t+2t^2-t^3)\frac{t^{4n}-1}{t^4-1}
\end{align*}
which has leading coefficient $2$.
% $g(K_{n,2n+1}) = 4n-2n-1+2 = 2n+1$.   $2g-1 = 4n-3$

\medskip

(\ref{alex})  View the surgery description for $K_{n,m}$ as the link $L=K \cup c \cup c'$ where we do $\frac{-1}{n+1}$ surgery on $c$ and $\frac{1}{m+2n+3}$ surgery on $c'$.
Observe $c \cup c'$ is the trivial $2$--component link and we may orient the link so that $\lk(K,c)=4$ and $\lk(K,c')=2$.

Let $E$ be the exterior of $L=K \cup c \cup c'$.  Then $H_1(E) = \langle [\mu_K], [\mu_c], [\mu_{c'}]\rangle \cong \Z^3$ where $\mu_K$, $\mu_c$, and $\mu_{c'}$ are oriented meridians of $K$, $c$, and $c'$.  Let $\lambda_K$, $\lambda_c$, and $\lambda_{c'}$ be their preferred longitudes.  Observe that $[\lambda_c] = 4[\mu_K]$ and $[\lambda_{c'}] = 2[\mu_K]$ in $H_1(E)$.

Now consider the family of links $L_{n,m} = K_{n,m} \cup c_n \cup c'_m$ with exterior $E_{n,m}$ obtained from $K$ and the core curves of $\frac{-1}{n+1}$ surgery on $c$ and $\frac{1}{m+2n+3}$ surgery on $c'$.  Thus $E_{n,m} \cong E$ where 
\[ \mu_{K_{n,m}} = \mu_K, \quad  \mu_{c_n} = -\mu_c + (n+1)\lambda_c, \quad \mbox{ and } \quad \mu_{c'_m}= \mu_{c'} +(m+2n+3)\lambda_{c'}.\]
Now letting
\begin{align}
x&=[\mu_K] & y&=[\mu_c] & z&=[\mu_{c'}]\\
x_{n,m}&=[\mu_{K_{n,m}}] & y_n&=[\mu_{c_n}] & z_m&=[\mu_{c'_m}]
\end{align}
in the group rings $\Z[H_1(E)]$ and $\Z[H_1(E_{n,m})]$
we have 
\[ {x_{n,m}} = x, \quad  y_n = y^{-1} x^{4(n+1)}, \quad \mbox{ and } \quad z_m= zx^{2(m+2n+3)}\]
and hence
\[x=x_{n,m}, \quad y = y_n^{-1}x_{n,m}^{4(n+1)}, \quad \mbox{ and } \quad z = z_m x_{n,m}^{-2(m+2n+3)}.\]
Therefore
\begin{equation}\label{eqn:alexshift}
    \Delta_{L_{n,m}}(x_{n,m}, y_n, z_m)= \Delta_L(x_{n,m}, y_n^{-1}x_{n,m}^{4(n+1)}, z_m x_{n,m}^{-2(m+2n+3)}).
\end{equation} 
Using the Torres Formulae \cite{torres}, one obtains that
\begin{align}\label{eqn:torres}
    \Delta_{K_{n,m}}(x_{n,m}) &= \frac{x_{n,m}-1}{x_{n,m}^4-1} \cdot \Delta_{K_{n,m} \cup c_n}(x_{n,m}, 1) \\
    &=\frac{x_{n,m}-1}{(x_{n,m}^4-1)(x_{n,m}^2-1)} \cdot \Delta_{K_{n,m} \cup c_n \cup c'_m}(x_{n,m}, 1,1). \nonumber
\end{align}
Hence, using Equations (\ref{eqn:alexshift}) and (\ref{eqn:torres}) where we set $x_{n,m} = t$, $y_{n} = 1$ and $z_m = 1$, we obtain
\[\Delta_{K_{n,m}}(t) = \frac{t-1}{(t^4-1)(t^2-1)} \cdot \Delta_{L}(t, t^{4(n+1)}, t^{-2(m+2n+3)}).\]
We calculate that  
\begin{equation*}
    \Delta_L(x,y,z)=(x^2-1)(x^3 y^2 z+x^2 y^3 z-x^2 y^2 z+x^2 y+x y^2 z-x y+x+y).
\end{equation*}
Then 
\begin{align*}
 \Delta_{K_{n,m}}(t) &=  \frac{t-1}{(t^4-1)(t^2-1)} \cdot   \Delta_L(t, t^{4(n+1)},t^{-2(m+2n+3)} ) \\
    &= t^{4n+3-m} \frac{(t-1)(t^{m-4n-2}+t^{-m}-t^{1-m}+t^{2-m}+t^{m+1}-t^{m+2}+t^{m+3}+t^{-m+4n+5})}{(t^4-1)}\\       
    &\doteq \frac{(t-1)\Big((t^{m-4n-2}-t^{m+2})+(t^{-m}+t^{2-m}+t^{m+1}+t^{m+3})+(t^{4n+5-m}-t^{1-m})\Big)}{(t^4-1)}\\   
    &= \frac{t^{m+2}(t-1)(t^{-4n-4}-1)}{t^4-1}+
    \frac{(t-1)(t^{-m}+t^{2-m}+t^{m+1}+t^{m+3})}{t^4-1}+
    \frac{t^{1-m}(t-1)(t^{4n+4}-1)}{t^4-1}\\
%     &= \Big(t^{m-1}\sum_{i=0}^{n} t^{-4i} (t^{-1}-1)\Big) + 
%     \Big( \frac{-t^{-m-2}+t^{-m-1}-t^{-m}+t^{1-m}-t^{m-1}+t^m
%   -t^{m+1}+t^{m+2}}{t^{2}-t^{-2}}\Big) +
%     \Big(t^{1-m}\sum_{j=0}^{n} t^{4j} (t-1)\Big)\\
    % &= \Big(t^{m-1}\sum_{i=0}^{n} t^{-4i} (t^{-1}-1)\Big) + 
    % \Big( \frac{(t^{m+1}-t^m+t^{-m}-t^{-m-1})(t+t^{-1})}{t^{2}-t^{-2}}\Big) +
    % \Big(t^{1-m}\sum_{j=0}^{n} t^{4j} (t-1)\Big)\\
    &= \Big(t^{m-1}\sum_{i=0}^{n} t^{-4i} (t^{-1}-1)\Big) + 
    \Big( \frac{t^{m+1}-t^m+t^{-m}-t^{-m-1}}{t-t^{-1}}\Big) +
    \Big(t^{1-m}\sum_{j=0}^{n} t^{4j} (t-1)\Big)\\
    &= \Big(t^{m-1}\sum_{i=0}^{n} (t^{-4i-1} - t^{-4i})\Big) + 
    \Big( (-1)^m \sum_{j=-m}^{m} (-t)^j\Big) +
    \Big(t^{1-m}\sum_{k=0}^{n} (t^{4k+1}-t^{4k})\Big)\\
\end{align*}
where the $\doteq$ indicates that we have divided out the unit $t^{4n+3-m}$.  
%\ken{Well, I think that works.  Some steps are commented out.}

\medskip

(\ref{lsp})
Using our Alexander polynomial calculations provide obstructions to the knots $K_{n,m}$ for $n>0$ being L-space knots when $m>0$.
As an example, taking $n>0$ and $m=1$ gives
\begin{align*}
 \Delta_{K_{n,1}}(t) &=  \frac{t-1}{(t^4-1)(t^2-1)} \cdot   \Delta_L(t, t^{4(n+1)},t^{-2(2n+4)} ) \\
    & \doteq \Big(\sum_{i=0}^{n} (t^{4i-1}-t^{4i})\Big) + \Big(t^{-1}-1+t\Big) + \Big(\sum_{k=0}^{n}  (t^{4k+1}-t^{4k})\Big).\\
\end{align*}
One may observe that the constant coefficient is $-3$.   Hence the knots $K_{n,1}$ cannot be L-space knots. Indeed, one may further observe that, when $n>0$ and $m>0$, the central terms will have overlap with the end terms to give coefficients $\pm2$ or $\pm3$ for terms with degree of small magnitude.  Thus none of the knots $K_{n,m}$ with $n>0$ and $m>0$ are L-space knots.

In the other direction, where $n>0$ and $m \leq 0$, we may observe via~\cite{LS,LM} as in Lemma~\ref{lem:Lspaceknots} that the Seifert fibered space $M$ resulting from $(8n+6-4m)$--surgery on $K_{n,m}$ is an L-space.  %Since we have already addressed the case $m=0$ in Lemma~\ref{lem:Lspaceknots}, we assume now that $m<0$. 
For that we need to distinguish several cases. We continue with the notation of Lisca-Stipsicz~\cite{LS} as in Lemma~\ref{lem:Lspaceknots}.

Since $n>0$, we have 
\begin{equation*}
    1>\frac 12>\frac{2n+1}{4n+4}>0.
\end{equation*}
So we must reckon with the coefficient 
\begin{equation*}
    \frac{2}{2m+4n+5}=\frac{2}{2(2n+m+1)+3}.
\end{equation*}
If $2n+m+1\geq1$ we see that 
\begin{equation*}
    1>\frac 12>\frac{2n+1}{4n+4}>\frac{2}{2m+4n+5}>0.
\end{equation*}
If we now assume that there exist coprime integers $a, b$ such that 
\begin{equation*}
    \frac12 b<a<\frac{2n+3}{4n+4}b \,\,\text{ and }\,\, \frac{2}{4n+2m+5}b<1
\end{equation*}
we conclude from the first inequality that $0<2a-b<\frac{b}{2n+2}$ and the second inequality implies that $b\leq 2n+2+m\leq2n+2$. Putting both together yields the contradiction
\begin{equation*}
    0<2a-b<\frac{b}{2n+2}\leq1.
\end{equation*}
Thus $M$ carries no positive transverse contact structure and is therefore an L-space.

If $2n+m+1=0$ we get the Seifert fibered space $M(-1;\frac23,\frac12,\frac{2n+1}{4n+4})$. We assume that there exist coprime integers $a,b$ such that $\frac23 b <a<\frac12 b$ and $\frac{2n+1}{4n+4}b<1$ from which we conclude $4b<6a<3b$ and $b<2+\frac{2}{2n+1}\leq4$, which is a contradiction. Therefore $M$ dos not carry a positive transverse contact structure and is thus an L-space.

If $2n+m+1=-1$, we get the Seifert fibered space $M(-1;\frac12,\frac{2n+1}{4n+4},2)=M(1;\frac12,\frac{2n+1}{4n+4})$ which is a lens space and hence an L-space.

If $2n+m+1=-2$ we consider the Seifert fibered space $M(-1;\frac12,\frac{2n+1}{4n+4},-2)=M(-3;\frac12,\frac{2n+1}{4n+4})$ which is a lens space and hence an L-space.

%which admits a positive transverse contact structure.  Thus we need to look at $-M=M(3;-\frac12,-\frac{2n+1}{4n+4})=M(1;\frac12,\frac{2n+3}{4n+4})$ which carries no positive transverse contact structure and thus $M$ is an L-space.

If $2n+m+1\leq -3$, we see that 
\begin{equation*}
    \frac{2}{2m+4n+5}=\frac{2}{2(2n+m+1)+3}\in[-1,0]
\end{equation*}
and thus the correctly normalized Seifert fibered space is $M(-2;\frac12,\frac{2n+1}{4n+4},\frac{4n+2m+7}{4n+2m+5})$ which admits a positive contact structure. Next, we consider 
$$-M=M\left(2;-\frac12,-\frac{2n+1}{4n+4},-\frac{4n+2m+7}{4n+2m+5}\right)=M\left(-1;\frac12,\frac{2n+3}{4n+4},-\frac{2}{4n+2m+5}\right).$$ If now $2n+m+1=-3$, then the correct ordering of the Seifert invariants is $M(-1;\frac23,\frac{2n+3}{4n+4},\frac12)$. We readily see that there exist no coprime integers $a,b$ such that $\frac23 b<a<\frac{2n+1}{4n+4} b$ and $\frac12 b<1$. Thus $M$ carries no positive transverse contact structure and is therefore an L-space. If $2n+m+1\leq-4$ the Seifert invariants are ordered as $M(-1;\frac{2n+3}{4n+4},\frac 12,-\frac{2}{4n+2m+5})$. We assume that there exist coprime integers $a,b$ such that 
\begin{equation*}
    \frac{2n+3}{4n+4} b<a<\frac12 b\,\,\text{ and }\,\, -\frac{2}{4n+2m+5} b<1.
\end{equation*}
But putting them together yields the contradiction
\begin{equation*}
0<a-\frac{2n+3}{4n+4} b<-\frac{1}{4n+4} b<0.
\end{equation*}
Thus $M$ does not admit a positive transverse contact structure and is therefore an L-space.
\end{proof}

\begin{rem}
In the cases of the above proof when $2n+m+1 =-1$ or $-2$, the knots $K_{n,m}$ have lens space surgeries. These knots  can be seen to be Berge knots as follows. With $-m-2n-3 = 1$ or $0$, Figure~\ref{fig:gen-monty}(d) can be seen to divide along a horizontal line into two rational tangles.  A vertical arc in the middle would be the arc dual to the rational tangle replacement on the $0$--framed arc from Figure~\ref{fig:gen-monty}(c).  In the double branched cover, this vertical arc will lift to a knot in the lens space with an $S^3$ surgery. Furthermore, one may observe that this arc lifts to a $(1,1)$--knot in the lens space.  Hence the knot $K_{n,m}$ must be a Berge knot \cite{berge}.
\end{rem}

\section{Curiosities about the Alexander polynomial of $o9\_30634$ and its generalizations}\label{sec:curiosities}

Like the failure of braid positivity, the hyperbolic L-space knot $o9\_30634$ exhibits two more curious properties that had previously only been observed for L-space knots among iterated cables of torus knots. The first, regarding formal semigroups, Teragaito communicated to us near the completion of the initial preprint.  The second, regarding the roots of its Alexander polynomial, came after that. Both actually generalize to the infinite family $\{K_n\}_{n \geq 1}$ as well.

\subsection{An infinite family of hyperbolic L-space knots whose formal semigroups are semigroups.}

Teragaito informed us about the work of Wang \cite{Wang18} on formal semigroups of L-space knots and that there are only two L-space knots in the SnapPy census whose formal semigroups were actual semigroups.  He had also observed that one of these knots appeared to fail to be braid positive.  It turns out that this is the knot $o9\_30634$ which we had confirmed to not be braid positive.  Upon seeing an early draft of this article, Teragaito further showed that all of our hyperbolic L-space knots $K_n$ have formal semigroups that are semigroups.  Below we overview the formal semigroup and then record Teragaito's results in Theorem~\ref{thm:formal_semigroups}.

An algebraic link is defined to be the link of an isolated singularity of a complex curve in $\C^2$. Algebraic knots are known to be iterated cables of torus knots~\cite{EisenbudNeumann} and they are all L-space knots, see \cite{heddencabling2}. Moreover, one can assign to any algebraic knot $K$ an additive semigroup $S_K< \N_0$ which determines the Heegaard Floer chain complex and is computable from the Alexander polynomial of $K$, see~\cite{BodnarCeloriaGolla} and references in there. 

In~\cite{Wang18} Wang has generalized this definition, but now $S_K$ is not necessarily a semigroup anymore. Let $K$ be an L-space knot with (symmetrized) Alexander polynomial $\Delta_K$ then the \textit{formal semigroup} $S_K\subset\N_0$ is defined by
\begin{equation*}
    \frac{t^{g(K)}\Delta_K(t)}{1-t}=\sum_{s\in S_K}t^s
\end{equation*}
where $g(K)$ denotes the genus of $K$. (Note that $t^{g(K)}\Delta_K(t)$ is now an ordinary polynomial of degree $2g(K)$.) The set $S_K$ still determines the Heegaard Floer chain complex of $K$ but is not necessarily a semigroup anymore. This is used by Wang to construct an infinite family of L-space knots which are iterated cables of torus knots but not algebraic~\cite{Wang18}. On the other hand, it remained open if there exist an L-space knot which is not an iterated cable of torus knots but whose formal semigroup is a semigroup~\cite[Question~2.8]{Wang18}.

\begin{theorem}[Teragaito \cite{teragaito}]
\label{thm:formal_semigroups}
There exist an infinite family of hyperbolic L-space knots whose formal semigroups are semigroups. More concretely:
\begin{enumerate}
    \item\label{census} $o9\_30634$ and $t09847$ are hyperbolic L-space knots whose formal semigroups are semigroups. The formal semigroup of every other L-space knot in the SnapPy census is not a semigroup. 

\item\label{infinite} The formal semigroups $S_{K_n}$ of the infinite family of hyperbolic L-space knots $\{K_n\}$ from Section~\ref{sec:potential_counterexamples} are all semigroups.
\end{enumerate}
Consequently, the knots $\{K_n\}$ provide an infinite family of knots giving a negative answer to \cite[Question 2.8]{Wang18}.
\end{theorem}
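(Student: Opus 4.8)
The plan is to handle the three assertions separately: part~(\ref{census}) is a finite verification, part~(\ref{infinite}) carries the real content, and the concluding sentence follows from hyperbolicity. For~(\ref{census}) I would loop over the $632$ census L-space knots. For each, SnapPy/Sage furnish $\Delta_K$ and $g(K)$, so one forms the power series $t^{g(K)}\Delta_K(t)/(1-t)$, records the support $S_K=\{s : [t^s]\,t^{g(K)}\Delta_K(t)/(1-t)=1\}$, and tests the finitely many sums deciding whether $S_K+S_K\subseteq S_K$. The assertion is that precisely $o9\_30634$ and $t09847$ survive; this is a bounded computation to be recorded in~\cite{BK}.

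For~(\ref{infinite}), note $K_n=K_{n,0}$, so Proposition~\ref{prop:Knm}(\ref{alex}) with $m=0$ supplies $\Delta_{K_n}$ and part~(\ref{sqpgenusfibered}) gives $g(K_n)=4n+2$; since $K_n$ is an L-space knot (Lemma~\ref{lem:Lspaceknots}) the series $t^{g}\Delta_{K_n}(t)/(1-t)$ has coefficients in $\{0,1\}$ and $S_{K_n}$ is genuinely a subset of $\N_0$. First I would expand $P(t):=t^{4n+2}\Delta_{K_n}(t)$, a polynomial of degree $8n+4$ whose coefficients follow a transparent residue pattern modulo $4$ ($+1$ on $4\Z$ and $-1$ on $4\Z+1$ across the lower half, a lone $+1$ at $4n+2$, then $-1$ on $4\Z+3$ and $+1$ on $4\Z$ across the upper half). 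Dividing by $1-t$ is exactly accumulating partial sums of these coefficients, and the telescoping yields
\[ S_{K_n} = \{4j : 0\le j\le n\}\ \cup\ \{\,s : 4n+2\le s\le 8n+3,\ s\not\equiv 3\!\!\pmod 4\,\}\ \cup\ \{\,s : s\ge 8n+4\,\}. \]
As a sanity check, the complement has $4n+2=g(K_n)$ elements, matching the general identity $\#(\N_0\setminus S_K)=g(K)$ that follows from $\Delta_K(1)=1$ and $\Delta_K'(1)=0$.

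With this description, closure under addition is a short case analysis, and this is the step I would write out most carefully. The key point is that any sum of two positive elements exceeding $8n+3$ lands in $S_{K_n}$ automatically, so only sums at most $8n+3$ need checking. Writing each summand as either a lower multiple of $4$ (at most $4n$) or an element $\ge 4n+2$: two lower elements sum to a multiple of $4$, hence lie in $S_{K_n}$; a lower multiple of $4$ added to some $s\ge 4n+2$ preserves $s\bmod 4$, which is $\ne 3$ whenever $s\in S_{K_n}\cap[4n+2,8n+3]$; and two elements $\ge 4n+2$ already sum past $8n+4$. In every case the sum lies in $S_{K_n}$, so $S_{K_n}$ is a numerical semigroup.

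Finally, each $K_n$ is hyperbolic (Section~\ref{sec:potential_counterexamples}), whereas every iterated cable of a torus knot is a torus knot or a nontrivial satellite and so never hyperbolic; hence the $K_n$ are L-space knots that are not iterated torus cables yet whose formal semigroups are semigroups, settling~\cite[Question~2.8]{Wang18} in the negative. I expect the only genuine labor to be the bookkeeping in~(\ref{infinite}): organizing the mod-$4$ coefficient pattern of $P(t)$ and its partial sums cleanly enough that closure reduces to the three-line check above; the conceptual steps are otherwise immediate.
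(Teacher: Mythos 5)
Your proposal is correct and takes essentially the same route as the paper: part~(\ref{census}) is the same finite computation delegated to~\cite{BK}, and in part~(\ref{infinite}) your description of $S_{K_n}$ (all multiples of $4$, the residues $0,1,2 \bmod 4$ in $[4n+2,\,8n+3]$, and everything from $8n+4$ on) coincides exactly with the set the paper reads off from Proposition~\ref{prop:Knm}(\ref{alex}). The only difference is presentational: the paper simply asserts that this set is a semigroup, whereas you spell out the mod-$4$ closure check and the gap-count sanity check $\#(\N_0\setminus S_{K_n})=g(K_n)$ --- added detail, not a different approach.
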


\begin{proof}
(\ref{census}) The formal semigroup $S_K$ of an L-space knot is computable from the Alexander polynomial of $K$, in particular we see that $S_K$ always contains all natural numbers larger than $g(K)$ and the finitely many other elements of $S_K$ can be read-off from the Alexander polynomial. In~\cite{BK} we present code that computes the formal semigroups of all SnapPy census L-space knots and determines that $o9\_30634$ and $t09847$ are the only ones whose formal semigroups are semigroups.

(\ref{infinite}) In Proposition~\ref{prop:Knm}(\ref{alex}) we have computed the Alexander polynomials of $K_n$ from which we read-off the formal semigroup to be
\begin{align*}
    S_{K_n}=&\{0,4,8,\ldots,4n\}\cup\{4n+2\}\cup\{4n+4,4n+5,4n+6,4n+8,4n+9,4n+10,\\
    &4n+12,4n+13,4n+14,\ldots,8n,8n+1,8n+2,8n+4\}\cup \N_{>8n+4},
\end{align*}
which is a semigroup for any $n$.
\end{proof}

\begin{rem}[Teragaito \cite{teragaito}]
A braid word of $t09847$ is given by 
\begin{equation*}
    [(2,1,3,2)^{3}, 1,2,1,1,2],
\end{equation*}
which is very close to our braid word for $o9\_30634$. One can similarly show that $t09847$ fits into an infinite family of hyperbolic L-space knots with braid words
\begin{equation*}
    [(2,1,3,2)^{2n+1}, 1,2,1,1,2],
\end{equation*}
whose formal semigroups are semigroups.
\end{rem}

\begin{rem}
The semigroups from Theorem~\ref{thm:formal_semigroups} and the preceeding remark all have rank $3$, i.e. the minimal number of a generating set is $3$. On the other hand, Teragaito constructs in~\cite{teragaitob} an infinite family of hyperbolic L-space knots whose formal semigroups are semigroups of rank $5$.
\end{rem}

\subsection{Two infinite families of hyperbolic L-space knots whose Alexander polynomial roots are all roots of unity.}

The Alexander polynomial of $o9\_30634 = K_1 =K_{1,0}$ can be written as 
\[\Delta_{o9\_30634}(t) \doteq \frac{\left(t^{6}+1\right) \left(t^{9}+1\right)}{(t+1) \left(t^2+1\right)}.\]
From this one may observe that all of its roots are roots of unity.  Since $o9\_30634$ is a hyperbolic L-space knot, it provides a counterexample to \cite[Conjecture 1.3]{LiNi} (see also the discussion surrounding its reference as \cite[Conjecture 6.10]{HWbotanygeography}).
Indeed, we have infinite families of hyperbolic L-space knots that are counterexamples to this conjecture.

\begin{theorem}\label{thm:rootsofunity}
The two infinite families of hyperbolic L-space knots $\{K_n\}_{n\geq1}$ and $\{K_{n,-1}\}_{n \geq 1}$ consist of knots whose Alexander polynomials have all of their roots on the unit circle.  
\end{theorem}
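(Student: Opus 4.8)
The plan is to show that the Alexander polynomials of both families factor into products and quotients of cyclotomic-type polynomials (those of the form $t^k+1$ and $t^k-1$), from which it is immediate that every root lies on the unit circle. I already have closed forms: Proposition~\ref{prop:Knm}(\ref{alex}) gives $\Delta_{K_{n,m}}(t)$ explicitly, and the family $\{K_n\}$ is the special case $m=0$ (since $K_n = K_{n,0}$, as noted by $o9\_30634 = K_1 = K_{1,0}$). So the first step is simply to specialize the formula to $m=0$ and to $m=-1$.

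\medskip

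For the family $\{K_n\} = \{K_{n,0}\}$, setting $m=0$ in the formula collapses the middle sum $(-1)^0\sum_{j=0}^0(-t)^j$ to the single term $1$, leaving
\[
\Delta_{K_n}(t) \doteq \Big(t^{-1}\sum_{i=0}^{n}(t^{-4i-1}-t^{-4i})\Big) + 1 + \Big(t\sum_{k=0}^{n}(t^{4k+1}-t^{4k})\Big).
\]
I would sum these finite geometric series in closed form, clear denominators, and recognize the result as a product/quotient of binomials $t^k\pm1$. The $n=1$ case already exhibits the target pattern, $\Delta_{o9\_30634}(t)\doteq\frac{(t^6+1)(t^9+1)}{(t+1)(t^2+1)}$, so I expect the general answer to read something like $\frac{(t^{4n+2}+1)(t^{6n+3}+1)}{(t+1)(t^2+1)}$ (up to units), which I would verify by cross-multiplying and matching with the summed geometric series. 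Once $\Delta_{K_n}$ is written as such a ratio, every root of the numerator is a root of unity, and one checks that the denominator factors divide the numerator (so no spurious poles remain and the quotient is a genuine polynomial all of whose roots are roots of unity, hence on the unit circle).

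\medskip

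For $\{K_{n,-1}\}$, I would set $m=-1$ in the same formula: the middle sum becomes $(-1)^{-1}\sum_{j=1}^{-1}(-t)^j$, which I would interpret carefully (the empty/reversed range and the overlap of the central terms with the end terms), then again collect everything over the common denominator $t^4-1$ using the pre-simplified expression
\[
\Delta_{K_{n,m}}(t) \doteq \frac{t^{m+2}(t-1)(t^{-4n-4}-1)}{t^4-1}+\frac{(t-1)(t^{-m}+t^{2-m}+t^{m+1}+t^{m+3})}{t^4-1}+\frac{t^{1-m}(t-1)(t^{4n+4}-1)}{t^4-1}
\]
that appears in the proof of Proposition~\ref{prop:Knm}(\ref{alex}); substituting $m=-1$ directly avoids any ambiguity in the summation bounds. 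I then factor the numerator and exhibit $\Delta_{K_{n,-1}}$ as a ratio of binomials $t^k\pm1$ over $t^4-1$, confirming the denominator divides the numerator so the quotient is a polynomial with all roots on the unit circle.

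\medskip

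\textbf{The main obstacle} I anticipate is purely algebraic bookkeeping rather than any conceptual difficulty: verifying that the binomial denominators genuinely divide the binomial numerators (so that the closed forms are honest polynomials, not merely rational functions), and handling the degenerate summation ranges in the $m=-1$ specialization where the central and outer blocks of the Alexander polynomial overlap. Both can be dispatched by factoring each $t^k-1 = \prod_{d\mid k}\Phi_d(t)$ into cyclotomic polynomials and checking containment of divisor sets; this also makes transparent the final conclusion, since every $\Phi_d$ has all its roots on the unit circle, and therefore so does any polynomial built as a product and exact quotient of the $t^k\pm1$.
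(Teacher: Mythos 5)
Your proposal follows the paper's proof essentially verbatim: specialize the formula from Proposition~\ref{prop:Knm}(\ref{alex}) at $m=0$ and $m=-1$ (using the same pre-simplified expression over $t^4-1$), collect everything over $(t+1)(t^2+1)$, and factor the numerator into binomials $t^k+1$, whose roots are all roots of unity. One detail that your promised cross-multiplication check would catch: the correct factorizations are $\Delta_{K_{n,0}}(t)\doteq \frac{(t^{4n+5}+1)(t^{4n+2}+1)}{(t+1)(t^2+1)}$ and $\Delta_{K_{n,-1}}(t)\doteq \frac{(t^{4n+6}+1)(t^{4n+3}+1)}{(t+1)(t^2+1)}$, so your guessed exponent $6n+3$ should be $4n+5$ (the two agree only at $n=1$), and divisibility of the denominator is automatic since the Alexander polynomial is a genuine (Laurent) polynomial.
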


\begin{proof}
Proposition~\ref{prop:Knm}(\ref{hyp}) and (\ref{lsp}) show that the knots of $\{K_n\}_{n\geq1}$ and $\{K_{n,-1}\}_{n \geq 1}$ are hyperbolic L-space knots. 
Proposition~\ref{prop:Knm}(\ref{alex}) gives a general formula for $\Delta_{K_{n,m}}(t)$.  In the course of that proof, we obtained the first equality below.  Dividing out the unit $t$ and rearranging gives the second.
\begin{align*}
 \Delta_{K_{n,m}}(t) 
    &= t^{4n+3-m} \frac{(t-1)(t^{m-4n-2}+t^{-m}-t^{1-m}+t^{2-m}+t^{m+1}-t^{m+2}+t^{m+3}+t^{-m+4n+5})}{(t^4-1)}\\
    & \doteq \frac{\left(t^{8 n+7}+t^{4n+4}-t^{4n+3}+t^{4n+2}\right) t^{-2 m}+\left(t^{4 n+3}-t^{4 n+4}+t^{4 n+5}+1\right)}{(t+1) \left(t^2+1\right)}\\
        \end{align*}
Setting $m=0$ yields
\begin{align*}
 \Delta_{K_{n,0}}(t) 
    & \doteq \frac{\left(t^{8 n+7}+t^{4n+4}-t^{4n+3}+t^{4n+2}\right)+\left(t^{4 n+3}-t^{4 n+4}+t^{4 n+5}+1\right)}{(t+1) \left(t^2+1\right)} \\
    &= \frac{t^{8 n+7}+t^{4 n+5}+t^{4n+2} +1}{(t+1) \left(t^2+1\right)}
    = \frac{\left(t^{4 n+5}+1\right)\left(t^{4n+2} +1\right)}{(t+1) \left(t^2+1\right)}
\end{align*}
while setting $m=-1$ yields
\begin{align*}
\Delta_{K_{n,-1}}(t) 
    &\doteq \frac{\left(t^{8 n+9}+t^{4n+6}-t^{4n+5}+t^{4n+4}\right)+\left(t^{4 n+3}-t^{4 n+4}+t^{4 n+5}+1\right)}{(t+1) \left(t^2+1\right)} \\
    &=\frac{t^{8 n+9}+t^{4n+6}+t^{4 n+3}+1}{(t+1) \left(t^2+1\right)} 
    = \frac{\left(t^{4n+6}+1\right)\left(t^{4 n+3}+1\right)}{(t+1) \left(t^2+1\right)}.
\end{align*}
From these presentations of their Alexander polynomials, one sees that all of their roots are roots of unity.
\end{proof}

\begin{rem} \phantom{x}
\begin{enumerate}
    \item While we do not yet know if any of the knots in $\{K_n\}_{n\geq1}$ are braid positive, all of the knots  $\{K_{n,-1}\}_{n\geq1}$ are braid positive by Proposition~\ref{prop:Knm}(\ref{posbraidgenus}).
    \item As one may check, the hyperbolic L-space knots $\{K_{n,-2}\}_{n\geq1}$ have Alexander polynomials with roots that are not roots of unity.
\end{enumerate}
\end{rem}

\begin{rem}
In light of Theorem~\ref{thm:rootsofunity} and \cite[Corollary 1.2]{BBG}, one may hope that that at least one the hyperbolic L-space knots among $\{K_n\}_{n\geq1}$ and $\{K_{n,-1}\}_{n \geq 1}$ has a double branched cover that is an L-space.  This would answer a question of Moore in the negative; see \cite[Question 1.3]{BBG}.  However, as one may check, these knots are not {\em definite}.  Indeed, $|\sigma(K_n)| = g(K_n)+2 < 2g(K_n)$ while $|\sigma(K_{n,-1})| = g(K_{n,-1})+3 <2g(K_{n,-1})$.
\end{rem} 

\subsection*{Acknowledgements}
We thank Tetsuya Ito for comments and discussions about braid positivity.  We thank Masakazu Teragaito for sharing his investigations on the formal semigroup and allowing their inclusion here. We thank Neil Hoffman for his help with handling orientation issues in SnapPy. We thank Steven Sivek for pointing out the potential for examples of hyperbolic L-space knots whose double branched covers are L-spaces.

KLB was partially supported by a grant from the Simons Foundation (grant \#523883  to Kenneth L.\ Baker).

MK thanks ICERM (the Institute for Computational and Experimental Research in Mathematics in Providence, RI)
for the productive environment during the semester program on Braids (Feb 1 - May 6, 2022) where part of this work was carried out. He is also thankful to the first author and
the University of Miami for their hospitality during his 2022 visit.

\let\MRhref\undefined
\bibliographystyle{hamsalpha}
\bibliography{NonBraidPositive.bib}

\clearpage
\section{Appendix}\label{appendix}
In this appendix we display braid words for all SnapPy census  knots. Except for $o9\_30634$, the braid words of the L-space knots are positive or negative (if the chirality in the SnapPy census yields a negative L-space knot).
	\begin{adjustwidth}{}{}
{\tiny
    % [inline block 0: 2 envs, 440919 chars -> data_tex | \begin{tabularx}{\linewidth}{@{}p{1cm}Xc>{\centering\arraybackslash}p{1.6cm}c@{}} ...]

       	}
       	\end{adjustwidth}

\end{document}